\newcommand{\mR}{{\mathbb{R}}}
\newcommand{\ignore}[1]{}
\newcommand{\Ad}{\mathbf{Ad}}
\newcommand{\cB}{{\mathcal B}}
\newcommand{\cL}{{\mathcal L}}
\newcommand{\cH}{\mathcal H}
\newcommand{\bR}{{\boldsymbol R}}
\newcommand{\bA}{{\boldsymbol A}}
\newcommand{\bL}{{\boldsymbol L}}
\newcommand{\bT}{{\boldsymbol T}}
\newcommand{\bK}{{\boldsymbol K}}
\newcommand{\Diff}{\mathrm{Diff}}
\newcommand{\id}{\mathrm{id}}
\newcommand{\scp}[2]{{\left\langle {#1}\, , \, {#2}\right\rangle}}
\newcommand{\lform}[2]{{\left( {#1}\, \left\vert \vphantom{#1}\,  {#2}\right.\right)}}
\begin{document}
 
\title{Metamorphosis of Images in Reproducing Kernel Hilbert Spaces}
\author{Casey L. Richardson\and Laurent Younes}
 \institute{C.L. Richardson, Johns Hopkins University,
Applied Physics Laboratory,
11100 Johns Hopkins Road,
Laurel, Maryland, 20723-6099 USA\\
\email{Casey.Richardson@jhuapl.edu}\\
 \and
L. Younes, Center for Imaging Science and Department of Applied Mathematics and Statistics, Johns Hopkins University, 3400 North Charles Street, Baltimore, MD 21218-2686, USA\\
\email{Laurent.Younes@jhu.edu}
}

\thanks{This work was completed while C.L. Richardson was a member of the Center for Imaging Science at JHU, and was partially supported by the National Science Foundation under grant number DMS-1016038.}

\maketitle

\begin{abstract}
\noindent
Metamorphosis is a method for diffeomorphic matching of shapes, with many potential applications for anatomical shape comparison in medical imagery, a problem which is 
central to the field of computational anatomy.  An important tool for the practical application of metamorphosis is a numerical method based on shooting from the initial momentum,
as this would enable the use of statistical methods based on this momentum, as well as the estimation of templates from hyper-templates using morphing.  In this paper we  introduce a shooting method, in the particular case of morphing images that lie in a reproducing kernel Hilbert space (RKHS).  We derive the relevant shooting equations 
from a Lagrangian frame of reference, present the details of the numerical approach, and illustrate the method through morphing of some simple images.
\subclass{58E50}
\keywords{Groups of Diffeomorphisms, Shape Analysis, Deformable Templates, Metamorphosis, Adjoint Methods}
\end{abstract}

\section{Introduction}

Metamorphosis is a  pattern matching framework that combines  diffeomorphic  mapping  with variations in shape or image space;
it has potential for interesting applications in shape analysis and computational anatomy \cite{hipp_volume_2003,tangent_space_rep_2004,shape_dat_2007}.  
One of its advantages is to  allow for transgression of the diffeomorphic constraint, inducing changes in topology between the template and the target image, enabling an exact matching
between template and target, through the minimization of a geodesic cost associated to a Riemannian metric on the product space of shapes and deformations. For images, this is accomplished by allowing both deformations of the template as 
well as smooth changes in the template's intensity values.  Through this combination of changes, the template is morphed into the target (see \cite{ty_2005,hty_2008,younes_book_2010}
for a precise description, and Section \ref{section.setup} for more details).

In this paper, we  generalize previously known results for image metamorphosis, and introduce a new shooting method for computing minimizers of the image metamorphosis matching functional, 
in the case where the images have some degree of smoothness (they are elements of a certain reproducing kernel Hilbert space).
Our work builds upon \cite{hty_2008}, which introduced a general  formulation of metamorphosis
using the Euler-Poincar\'e framework, and then derived the continuous-time evolution equations for metamorphosis (EPMorph) in 
several concrete situations, such as image matching, density matching, and measure matching.  This paper also suggested 
extensions of its analysis and numerics for further work, e.g. the numerics for morphing of discrete measures which
was analyzed by the authors of this paper in \cite{ry_2013}.  In Section 11.2 of \cite{hty_2008}, Holm et al. 
apply metamorphosis to the case of images that are members of a reproducing kernel Hilbert space (RKHS),
and then they propose the development of numerical methods for the EPMorph equations in this context.
In this paper, we develop this idea into a shooting method for morphing RKHS images, by deriving the appropriate 
forward and adjoint equations, and then we present some numerical experiments that illustrate the use of such a method for 
simple examples of shape matching. We also complete the theoretical analysis of these methods, in a framework that covers a large range of applications. 

The first part of the paper provides a formal presentation of the approach, leaving the detailed discussion of the hypotheses and rigorous proofs to the second part, constituted by section \ref{sec:proofs}. The basic notation and assumptions are presented in section \ref{section.setup} together with the metamorphosis variational problem and associated optimality equations. Section \ref{sec:singular} describes a family of singular solutions that satisfy the optimality equations, providing a key component of the proposed numerical procedure. These singular solutions are then reinterpreted in section \ref{sec:relaxed} as the solutions that arise from a relaxation of the original problem replacing the infinite-dimensional boundary conditions in image space with a finite number of constraints. The numerical solution of the relaxed problem is then described in section \ref{sec:solution}, with complements given in the appendix. Section \ref{sec:experiments} then provides experimental results.

\section{Mathematical Setup}
\label{section.setup}
Reproducing kernel Hilbert spaces will be key elements in our construction.
If $X$ is a Banach or Hilbert space, we will denote by $\lform{\mu}{h}$ the pairing between a linear form $\mu\in X^*$ and a vector $h\in X$; the inner product in a Hilbert space $X$ will be denoted
by $\scp{h}{k}_X$, $h,k\in X$. In the Hilbert case, we will denote by $\bK_X$ the isometry map between $X^*$ and  $X$, such that $\lform{\mu}{h} = \scp{\bK_X\mu}{h}_X$, and by $\bA_X$ its inverse, $\bA_X = \bK_X^{-1}$. If $X$ and $Y$ are Banach and $A: X\to Y$ a bounded operator, we let $A^*: Y^* \to X^*$ be the adjoint, defined by $\lform{A^*\mu}{h} = \lform{\mu}{Ah}$. If $X=Y$ are Hilbert, we let $A^T$ be the transpose, defined by $\scp{A^T h}{\tilde h}_H = \scp{h}{A\tilde h}$, or $A^T = \bK_XA^* \bA_X$.  We will also denote by $A^T$ the transpose matrix of a finite-dimensional operator. Finally, if $X,Y$ are two Banach spaces $\cL(X,Y)$ denotes the set of bounded linear operators from $X$ to $Y$, and the operator norm is denoted $\|\cdot\|_{\cL(X,Y)}$. If $Y=X$, we will use $\cL(X)$ instead of $\cL(X,Y)$.

A Hilbert space $X$ continuously embedded in $L^2(\mR^d, \mR^k)$ is a reproducing kernel Hilbert space (RKHS) if, for all $x\in \mR^d$, the Dirac measure $\delta_x: X \to \mR^k$, defined by $\delta_x(h) = h(x)$ is  a bounded linear map. If $X$ is an RKHS, and given $a\in \mR^k$, we will denote by  $a\cdot \delta_x$ the continuous linear form $\lform{a\cdot \delta_x}{h} = a\cdot h(x)$, where the latter denotes the usual dot product in $\mR^k$. The kernel of $X$ is then the matrix-valued function $(x,y) \mapsto K_X(x,y)$ defined by
\[
K_X(x,y)a = \bK_X(a\cdot \delta_y)(x).
\]
($K_X(x,y)$ is a $k$ by $k$ matrix, and $k$ will be either $d$ or 1 in the following discussion.)\\

Metamorphosis is a diffeomorphic registration framework: it is 
formulated using a certain subgroup of diffeomorphisms of $\mathbb{R}^d$ acting, as a left group action,
on images (see \cite{miller_younes_1,camionyounes_2001,ty_2005,hty_2008,ry_2013} for more general classes of metamorphoses). 
This group, denoted $\Diff_V$, is the set of all diffeomorphisms of $\mR^d$ that can be attained as flows of time-dependent vector fields $v \in L^2([0,1]; V)$,
where $V$ is a reproducing kernel Hilbert space continuously embedded in $\cB^p := C_0^p(\mathbb{R}^d; \mathbb{R}^d)$ for 
some $p \geq 1$ (the space 
of $C^p$ vector fields that decay to zero at infinity). More precisely, $\psi\in G$ if and only if $\psi = \varphi(1)$, where $\varphi$ is the solution of 
\[
\begin{array}{l}
 \dot{\varphi}(t) = v(t) \circ \varphi(t), \\ 
  \varphi(0) = \mbox{id}
\end{array}
\]
for some $v$ satisfying $\int_0^1 \|v(t)\|_V^2 dt < \infty$. The group $\Diff_V$ is then embedded in the space $\Diff^p$ of diffeomorphisms $\psi$ such that $\psi-\id$ and $\psi^{-1}-\id$ both belong to $\cB^p$, which forms an open subset of the  affine space $\mbox{id} + \cB^p$.\\

In most this paper, the image space is a scalar RKHS, denoted $H$ (we will weaken this assumption in some of the results of section \ref{sec:proofs}). To simplify the discussion, we will assume that $H$ is equivalent to a Sobolev space $\cH^r(\mR^d)$ (the space of functions with square integrable partial derivatives up to order $r$) for some $r > d/2+1$, so that elements of $H$ are differentiable. Assuming that $p\geq r$, we will consider the the action of $C^p$ diffeomorphisms  $H$ given by $\varphi\cdot q = q\circ \varphi^{-1}$ .\\

In order to connect two images $q^{(0)}$ and $q^{(1)}$ in $H$ with a continuous path $q(t)$, image metamorphosis solves the optimal control problem
\begin{multline}
\label{eq:meta.cost}
\frac12 \int_0^1 \|v(t)\|_V^2 dt + \frac1{2\sigma^2} \int_0^1 \|\zeta(t)\|^2_H dt \longrightarrow \min\\
\text{subject to } \dot q(t) = \nabla q(t)\cdot v(t) + \zeta(t),\ q(0) = q^{(0)} \text{ and } q(1) = q^{(1)}. 
\end{multline}• 

We will prove in section \ref{sec:proofs} that, under some additional conditions, solutions of this problem exist and satisfy a Pontryagin maximum principle (PMP) that we derive formally  here.  Introduce the control-dependent Hamiltonian
\[
H(p,q,v, \zeta) = \lform{p}{\nabla q\cdot v + \zeta} - \frac12 \|v\|_V^2 - \frac1{2\sigma^2} \|\zeta\|_H^2.
\]
The PMP \cite{vincent1999nonlinear,agrachev2004control} states that optimal solutions of \eqref{eq:meta.cost} satisfy
\begin{equation*}
\begin{cases}
\dot q(t) = \partial_p H\\
\dot p(t) = -\partial_q H\\
(v, \zeta) = \mathrm{argmax}\, H(p, q, \cdot, \cdot)
\end{cases}
\end{equation*}
yielding
\begin{equation}
\label{eq:pmp}
\begin{cases}
\dot q(t) = \nabla q(t)\cdot v(t) + \zeta(t)\\
\dot p(t) + \nabla\cdot(p(t) v(t)) = 0\\
\zeta(t) = \sigma^2	\bK_H p(t)\\
v(t) = -\bK_V(\nabla q(t)\cdot p(t))
\end{cases}
\end{equation}
\\

We will use the following reformulation of 
problem \eqref{eq:meta.cost}. The evolution equation for $q$ is an advection and is equivalent to 
\[
\dot m(t, \cdot) = \zeta(t, \varphi(t, \cdot))
\]
with $m(t, \cdot) = q(t, \varphi(t, \cdot))\in H$. Considering $(\varphi, m)$ as a new state, we can  define the problem
\begin{multline}
\label{eq:meta.cost.2}
\frac12 \int_0^1 \|v(t)\|_V^2 dt + \frac1{2\sigma^2} \int_0^1 \|\zeta(t)\|^2_H dt \longrightarrow \min\\
\text{subject to } \dot \varphi(t) = v(t)\circ\varphi(t),\ \dot m(t) = \zeta(t)\circ \varphi(t),\ m(0) = q^{(0)} \text{ and } m(1) = q^{(1)}\circ \varphi(1). 
\end{multline}• 
One of the interests of introducing \eqref{eq:meta.cost.2} is that the formulation does not require $m$ to be differentiable (in space) anymore (one can however use a generalized form of the evolution equation in \eqref{eq:meta.cost} to make this problem equivalent to \eqref{eq:meta.cost.2} --- see \cite{ty_2005}). Moreover, applying (still formally) the PMP to \eqref{eq:meta.cost.2} yields another set of optimality conditions that will be convenient later. Introduce a co-state $\rho = (\rho_\varphi, \rho_m) \in (\cB^p)^* \times H^*$ and the Hamiltonian
\[
H(\rho_\varphi, \rho_m,\varphi, m,v, \zeta) = \lform{\rho_\varphi}{v\circ \varphi} + \lform{\rho_m}{\zeta\circ \varphi} - \frac12 \|v\|_V^2 - \frac1{2\sigma^2} \|\zeta\|_H^2.
\]
For $\varphi\in \Diff_V$, introduce the operators $\bT_\varphi: v \to v\circ \varphi$ and $\tilde \bT_\varphi: \zeta \to \zeta \circ \varphi$, respectively from $V$ to $\cB^p$ and from $H$ to itself.
The PMP then gives the equations
\begin{equation}
\label{eq:pmp.2}
\begin{dcases}
\dot \varphi = v \circ \varphi\\
\dot m = \zeta\circ \varphi\\
\dot \rho_\varphi = - \partial_\varphi\lform{\rho_\varphi}{v\circ \varphi} - \partial_\varphi \lform{\rho_m}{\zeta\circ \varphi}\\
\dot \rho_m = 0 \\
v = \bK_V \bT_\varphi^* \rho_\varphi\\
\zeta = \sigma^2 \bK_H \tilde \bT_\varphi^* \rho_m
\end{dcases}
\end{equation}
These conditions imply, in particular, that $\rho_m$ is constant.
The boundary condition $m (1) \circ \varphi(1) = q^{(1)}$ implies a boundary condition for $\rho$, namely that $\lform{\rho_\phi(1)}{w} + \lform{\rho_m}{z} = 0$ whenever 
\[
z  =  \nabla m(1)\cdot D\varphi(1)^{-1} w,
\]
since $\nabla q^{(1)}\circ \varphi(1)  = D\varphi(1)^{-T}\nabla q^{(1)}$. This yields
\[
\lform{\rho_\varphi(1)}{w} + \lform{\rho_m}{\nabla m(1) \cdot D\varphi(1)^{-1} w} = 0.
\]
for all $w\in \cB^p$, or, replacing $w$ by $D \varphi(1)w$,
\begin{equation}
\label{eq:bndry}
\lform{\rho_\varphi(1)}{D\varphi(1) w} + \lform{\rho_m}{\nabla m(1)\cdot w} = 0
\end{equation}
holding for all $w\in \cB^{p}$.\\

  Note that system \eqref{eq:pmp.2} implies that 
\begin{multline*}
\partial_t \left( \lform{\rho_\varphi(t)}{D \varphi(t) w} + \lform{\rho_m}{\nabla m(t)\cdot w} \right)=\\
 - \lform{\rho_\varphi(t)}{Dv(t) \circ \varphi(t) D\varphi(t) w} - \lform{\rho_m}{\nabla \zeta(t)\circ \varphi(t)\cdot D\varphi(t) w}\\
+ \lform{\rho_\varphi(t)}{Dv(t)\circ \varphi(t) D \varphi(t) w} + \lform{\rho_m}{\nabla \zeta(t) \circ \varphi(t)\cdot D \varphi(t) w}
=0,
\end{multline*}
for which we have used $\partial_t D \varphi(t) = Dv(t) \circ \varphi(t) D \varphi(t)$ and $\partial_t \nabla m(t) = D \varphi(t)^T \nabla \zeta(t)\circ \varphi(t)$. This implies that the linear form
\[
\mu(t) : w \mapsto \lform{\rho_\varphi(t)}{D \varphi(t) w} + \lform{\rho_m(t)}{\nabla m(t)\cdot w}
\]
is invariant along \eqref{eq:pmp.2}, and the boundary condition \eqref{eq:bndry} propagates over all times, i.e., $\mu(t)=0$ over $[0,1]$.

Finally, we let the reader check that one can pass from solutions of \eqref{eq:pmp} to solutions of \eqref{eq:pmp.2} with the change of variables $q(t) \circ \phi(t) = m(t)$ and 
\[
\lform{p(t)}{z} = \lform{\rho_m}{z\circ \phi(t)}.
\]
Note also that the boundary condition can be rewritten in terms of $q = m\circ \varphi^{-1}$ as
\begin{equation}
\label{eq:bdry.q}
\lform{\rho_\varphi(t)}{w} = \lform{\rho_m}{\nabla q(t)\cdot w}.
\end{equation}

 \section{Singular Solutions}
 \label{sec:singular}

It was recognized in \cite{hty_2008} that system \eqref{eq:pmp} admits a family of singular solutions. These solutions are obtained directly from \eqref{eq:pmp.2} by taking $\rho_\varphi$ and $\rho_m$ in the form
\begin{eqnarray}
\label{eq:sing.1}
\rho_\varphi(t) &=& \sum_{k=1}^N z_k(t)\cdot \delta_{x_k^{(0)}}\\
\label{eq:sing.2}
\rho_m &=& \sum_{k=1}^N \alpha_k \delta_{x_k^{(0)}}
\end{eqnarray}•
Here,  $x^{(0)} = \{x^{(0)}_k\}_{k=1}^N$ is a collection of points, or particles, in $\mR^d$, $z(t) = \{z_k(t)\}_{k=1}^N$ is a collection of time-dependent vectors in $\mR^d$, $\alpha = \{\alpha_k\}_{k=1}^N$ is a time-independent collection of scalars.\\

Introduce the trajectories $x_k(t) := \varphi(t, x^{(0)}_k)$. Using this notation, we have
\[
\lform{\bT_{\varphi(t)}^*\rho_\varphi(t)}{w} = \lform{\rho_\varphi(t)}{w\circ \phi(t)} = \sum_{k=1}^N z_k(t)\cdot w(x_k(t))
\]
so that 
\[
\bT_{\varphi(t)}^*\rho_\varphi(t) = \sum_{k=1}^N z_k(t)\cdot \delta_{x_k(t)}
\]
and \eqref{eq:pmp.2} implies that (using the reproducing kernel of $V$)
\[
v(t, \cdot) = \sum_{\ell=1}^N K_V(\cdot, x_\ell(t)) z_\ell(t).
\]
Similarly, one gets
\[
\zeta(t, \cdot) =\sigma^2 \sum_{\ell=1}^N K_H(\cdot, x_\ell(t)) \alpha_\ell.
\]
The third equation in \eqref{eq:pmp.2} gives, for $w \in \cB^{p}$,
\[
\sum_{k=1}^N \dot z_k(t)\cdot w(x_k^{(0)})  = 
-  \sum_{k=1}^N z_k(t)\cdot Dv(x_k(t)) w(x_k^{(0)}) - \sum_{k=1}^N \alpha_k \nabla \zeta(x_k(t))\cdot w(x_k^{(0)})
\]
from which we get
\[
\dot z_k(t) = -Dv(x_k(t))^T z_k(t) - \alpha_k \nabla \zeta(x_k(t)).
\]
Using the expansions of $v$ and $\zeta$ and the fact that $\dot x_k = v(t, x_k)$, we obtain the fact that \eqref{eq:sing.1} and \eqref{eq:sing.2} provide solutions of \eqref{eq:pmp.2} as soon as $x$, $m$  and $z$ satisfy the  coupled dynamical system
\begin{equation}
\label{eq:disc.syst}
\begin{cases}
  \displaystyle\dot{x}_k(t) = \sum_{\ell=1}^{N} K_{V}(x_k(t), x_\ell(t)) z_\ell(t) \\
  \displaystyle \dot m_k(t) = \sum_{\ell=1}^N K_{H}(x_k(t), x_\ell(t)) \alpha_\ell\\
  \displaystyle \dot{z}_k(t) = - \sum_{\ell=1}^{N} \nabla_1 K_V(x_k(t), x_\ell(t)) z_\ell(t)\cdot z_k(t) 
 - \frac1{\sigma^2}\sum_{\ell=1}^N \nabla_1 K_H(x_k(t), x_\ell(t)) \alpha_k \alpha_\ell
\end{cases}
\end{equation}
(with the notation $m_k(t) = m(t,x_k^{(0)})$).
The boundary condition applied to $\rho_\varphi$ and $\rho_m$ is 
\[
\sum_{k=1}^N z_k(t)\cdot w = -\sum_{k=1}^N \alpha_k \nabla m(t, x_k^{(0)})\cdot D\varphi(t, x_k^{(0)})^{-1} w
\]
yielding
\[
z_k(t) = - \alpha_k D\varphi(t, x_k^{(0)})^{-T} \nabla m(x_k^{(0)}) = -\alpha_k \nabla q(t, x_k(t))
\]

Note that, given the initial positions $\{ x^{(0)}_k \}$, and initial image $q^{(0)}$, the above system is uniquely specified by the choice of the scalar field $\alpha$, since $z_k(0) = -\alpha_k \nabla q^{(0)}(x_k^{(0)})$. The solutions $\{x_k, z_k\}$ then determine the controls $v$ and $\zeta$ for all $t$ and $x\in \mR^d$, which define in turn the evolving image $q$. This will allow us to design a shooting method for computing metamorphoses that will look for initial conditions that bring trajectories to a desired endpoint.

\section{Discrete Relaxed Problem}
\label{sec:relaxed}

Equations \eqref{eq:disc.syst} are optimality equations for the following relaxation of \eqref{eq:meta.cost.2}:
\begin{multline}
\label{eq:meta.cost.3}
\frac12 \int_0^1 \|v(t)\|_V^2 dt + \frac1{2\sigma^2} \int_0^1 \|\zeta(t)\|^2_H dt \longrightarrow \min\\
\text{subject to }\dot x_k(t) = v(t, x_k(t)),\ \dot m_k(t) =  \zeta(t, x_k(t)),\ m_k(0) = q^{(0)}(x_k^{(0)}),\\
 \text{ and }m_k(1) = q^{(1)}(x_k(1)).
\end{multline}
This is just \eqref{eq:meta.cost.2} with boundary conditions only enforced at the initial and final points of the trajectories $x_k(t), k=1, \ldots, N$. Because the constraints only depend on the evaluation of $v$ and $\zeta$ along the discrete trajectories, the optimal ones should minimize their respective norms subject to the values taken at these points. Well-known results on RKHS's \cite{aronszajn1950theory,wahba1990spline} imply that these optimal solutions must assume the form 
\begin{eqnarray*}
v(t, \cdot) &=& \sum_{k=1}^N K_V(\cdot, x_k(t)) z_k(t) \\
\zeta(t, \cdot) &=& \sum_{k=1}^N K_H(\cdot, x_k(t)) \alpha_k(t)
\end{eqnarray*}•
for some coefficients $z$ and $\alpha$, and that their norms are given by
\begin{eqnarray*}
\|v\|_V^2 &=& \sum_{k, \ell=1}^N z_k(t)\cdot K_V(x_k(t), x_\ell(t)) z_\ell(t) \\
\|\zeta(t)\|_H^2 &=& \sum_{k,\ell=1}^N K_H(x_k(t), x_\ell(t)) \alpha_k(t) \alpha_\ell(t).
\end{eqnarray*}

Solutions of \eqref{eq:meta.cost.3} are therefore solutions of the reduced problem
\begin{align}
\label{eq:meta.cost.4}
\frac12 \sum_{k, \ell=1}^N  \int_0^1 z_k(t)\cdot K_V(x_k(t), x_\ell(t)) z_\ell(t) dt &+ \frac1{2\sigma^2} \sum_{k,\ell=1}^N \int_0^1 K_H(x_k(t), x_\ell(t)) \alpha_k(t) \alpha_\ell(t) dt \longrightarrow \min\\
\nonumber
&\text{subject to }\\
\nonumber
&\dot x_k(t) = \sum_{\ell=1}^N K_V(x_k(t), x_\ell(t)) z_\ell(t),\\
\nonumber
&\dot m_k(t) =  \sum_{\ell=1}^N K_H(x_k(t), x_\ell(t))  \alpha_\ell(t),\\
\nonumber
 &m_k(0) = q^{(0)}(x_k^{(0)}) \text{ and }m_k(1) = q^{(1)}(x_k(1)).
\end{align}\\

The PMP associated to this problem derives, as before, from a control-dependent Hamiltonian
\begin{eqnarray}
\label{eq:disc.ham}
H_{\alpha, z}(p_x, p_m, x,m) &=& \sum_{k, \ell=1}^N p_{x,k}(t)\cdot K_V(x_k(t), x_\ell(t)) z_\ell(t) \\
\nonumber &&+ \sum_{k,\ell=1}^N K_H(x_k(t), x_\ell(t)) p_{m,k}(t) \alpha_\ell(t)\\
\nonumber && -\frac12 \sum_{k, \ell=1}^N  z_k(t)\cdot K_V(x_k(t), x_\ell(t)) z_\ell(t) dt \\
\nonumber &&- \frac1{2\sigma^2} \sum_{k,\ell=1}^N  K_H(x_k(t), x_\ell(t)) \alpha_k(t) \alpha_\ell(t)
\end{eqnarray}
It is then easy to check that the optimality conditions $\partial_z H = 0$ and $\partial_\alpha H = 0$ imply that $p_x=z$ and $p_m = \alpha$; from $\partial_m H=0$, one finds that $\alpha$ is constant; finally, the equation $\dot z = - \partial_x H$ yields an equation identical to the evolution of $z$ in  \eqref{eq:disc.syst}. \\

The boundary condition for \eqref{eq:meta.cost.4} is 
\[
z_k(1) = - \alpha_k \nabla q^{(1)}(x_k(1)).
\]
This identity propagates over time as follows: define $\tilde m(t)\in H$ by $\partial_t \tilde m =\zeta(t) \circ \varphi(t)$ with $\tilde m(1) = q^{(1)} \circ \varphi(1)$. Define $\tilde q(t)$ such that $\tilde m(t) = \tilde q(t) \circ \varphi(t)$. Then
\[
z_k(t) = - \alpha_k \nabla \tilde q(t, x_k(t))
\]
at all times. To prove this statement write
\[
\partial_t \nabla \tilde m(t) = D\varphi(t)^T \nabla \zeta(t)\circ \varphi(t)
\]
on the first hand, and, on the other hand, 
\begin{eqnarray*}
\partial_t \nabla \tilde m(t) &=& \partial_t (D\varphi(t)^T \nabla \tilde q(t) \circ \varphi(t))\\
&=& D\varphi(t)^T Dv(t)\circ \varphi(t)^T \nabla \tilde q(t) \circ \varphi(t) + D\varphi(t)^T \partial_t (\nabla \tilde q(t)\circ \varphi(t)).
\end{eqnarray*}
Identifying the expressions, we find
\[
\partial_t (\nabla \tilde q(t, x_k(t))) = - Dv(t, x_k(t))^T \nabla \tilde q(t, x_k(t)) + \nabla \zeta(t, x_k(t)).
\]
This implies
\[
\partial_t (z_k(t) + \alpha_k \nabla \tilde q(t,x_k(t))) = - Dv(t, x_k(t))^T (z_k(t) + \alpha_k \nabla \tilde q(t,x_k(t)))
\]
proving that $D \varphi(t, x_k(0))^T (z_k(t) + \alpha_k \nabla \tilde q(t,x_k(t)))$ is conserved along the motion. This quantity therefore vanishes at all times as soon as it vanishes at time $t=1$.

Note that this boundary condition differs from the one we had in the unrelaxed problem, because $\tilde m$ and $\tilde q$ are not necessarily identical to $m$ and $q$. We have, actually, $q(t, x_k(t)) = \tilde q(t, x_k(t))$ for all $k$ and $t$, since they have the same derivative and coincide at $t=1$,  but this identity does not hold for the the full functions $q(t, \cdot)$ and $\tilde q(t, \cdot)$, since the constraints at $t=1$ only involve the particles.
Note also that, if one initializes system \eqref{eq:disc.syst} with $z_k(0) = -\alpha_k \nabla q^{(0)}(x_k^{(0)})$, one also gets $z_k(t) = - \alpha_k \nabla  q(t, x_k(t))$ at all times. This can be an interesting constraint to enforce, since it is consistent with the continuous problem, even though this does not provide a solution of the relaxed problem.

\section{Solution of the Discrete Problem} 
\label{sec:solution}

We now describe a shooting method for the solution of \eqref{eq:meta.cost.4}, in which we solve for  $(\alpha_1, \ldots, \alpha_N)$ and $(z_1^{(0)}, \ldots, z_N^{(0)})$ such that the solution of \eqref{eq:disc.syst} initialized at $x_k(0) = x_k^{(0)}$, $m_k(0) = q^{(0)}(x_k^{(0)})$ and $z_k(0) = z_k^{(0)}$ satisfies $m_k(1) = q^{(1)}(x_k(1))$ for $k=1,\ldots, N$. Considering $x_k(\cdot)$ and $m_k(\cdot)$ as functions of $\alpha$ and $z^{(0)}$, we  minimize
\begin{equation}
\label{eq:ener}
 E(\alpha, z^{(0)}) = \sum_{k=1}^N (m_k(1) - q^{(1)}(x_k(1))^2.
\end{equation}
Here, we assume that $q^{(1)}$ is defined and known everywhere (by interpolation, for example). Computing the differential of $E$ gives
\begin{equation}
\label{eq:de}
dE = 2\sum_{k=1}^N (m_k(1) - q^{(1)}(x_k(1)) (dm_k(1) - \nabla q^{(1)}(x_k(1))\cdot dx_k(1)) 
\end{equation}
where $d m_k$ and $d x_k$ are differentials dual to infinitesimal changes in the discrete variables $m_k$ and $x_k$.

To compute $dE$, we apply the well-known adjoint method to compute derivatives of functions of solutions of dynamical systems. Writing $\theta(t) = (x,m,z)$, and defining $F$ so that \eqref{eq:disc.syst} is $\dot \theta = F(\theta, \alpha)$, we let $\boldsymbol\theta(t, \theta^{(0)}, \alpha)$ denote the solution of this equation with initial condition $\theta(0) = \theta^{(0)}$ and parameter $\alpha$. Given variations $\delta \alpha$ and $\delta \theta^{(0)}$, then
\[
\delta \theta(t) := \partial_{\theta^{(0)}} \boldsymbol\theta . \delta\theta^{(0)} + \partial_\alpha \boldsymbol\theta . \delta\alpha
\] 
satisfies the ODE
\[
\partial_t \delta \theta = \partial_\theta F(\theta, \alpha) . \delta \theta + \partial_\alpha F(\theta, \alpha). \delta\alpha
\]
with initial condition $\delta \theta(0) = \delta \theta^{(0)}$. Introduce the solution $\xi = (\xi_x, \xi_m, \xi_z)$ of the adjoint ODE
\[
\partial_t \xi = -\partial_\theta F(\theta, \alpha).  \xi
\]
so that
\begin{equation}
\label{eq:dxi}
\partial_t (\xi\cdot\delta\theta) = \xi\cdot \partial_\alpha F(\theta, \alpha). \delta\alpha.
\end{equation}
If one takes 
\begin{equation}
\label{q:xi1}
\xi(1) = \{-2(m_k(1) - q^{(1)}(x_k(1))\nabla q^{(1)}(x_k(1)), 0, 2(m_k(1) - q^{(1)}(x_k(1)),\}_{k=1}^N
\end{equation}•
then, from \eqref{eq:de} and \eqref{eq:dxi},
\[
dE .\delta\theta = \xi(1)\cdot \delta \theta(1) = \xi(0)\cdot \delta \theta^{(0)} + \left(\int_0^1 \partial_\alpha F(\theta, \alpha).\xi dt\right)^T \delta \alpha.
\]
In other terms,  defining $\xi(t)$ and $\eta(t)$ as solutions of the system
\begin{equation}
\label{eq:adj.syst}
\begin{cases}
\partial_t \xi = -\partial_\theta F(\theta, \alpha)^T  \xi\\
\partial_t \eta = -\partial_\alpha F(\theta, \alpha)^T  \xi
\end{cases}
\end{equation}•
with $\xi(1)$ as above and $\eta(1) = 0$,  one finds
\[
\partial_{\theta^{(0)}} E = \xi(0) \text{ and } \partial_\alpha E = \eta(0).
\]
Detailed expressions for system \eqref{eq:adj.syst} expressed in terms of $x$, $\alpha$ and $z$ are provided in the appendix.\\

This system is used for the adjoint method to transport the discrete covector $dE$ backwards in time, in order to find
a descent direction for the optimization. In our implementation, the initial conditions $m^{(0)}$ and $x^{(0)}$ are fixed, and the optimization only operates on $z^{(0)}$ and $\alpha$, yielding Algorithm \ref{alg.shooting}.
\begin{algorithm}
\caption{Shooting Algorithm}
\label{alg.shooting}
\begin{algorithmic}
\Require { template $q^{(0)}$, target  $q^{(1)}$; specify kernels $K_{V}, K_H$}; matching parameter $\sigma$ \\
$\alpha \gets 0, z^{(0)} \gets 0$
\While{ (not stop CG) } \\
\hspace{\algorithmicindent}1. Compute $\partial_{z^{(0)}} E, \partial_\alpha E$: \\
\hspace{\algorithmicindent}\hspace{\algorithmicindent}1.1 Compute $dE = \partial_{x_k} E ~dx_k + \partial_{m_k}E ~dm_k $ given by \eqref{eq:de} \\
\hspace{\algorithmicindent}\hspace{\algorithmicindent}1.2 Compute $\xi_z(0), \eta(0)$: solve the adjoint system backwards in time starting from $dE$ at $t=1$. \\
\hspace{\algorithmicindent}2. Update conjugate direction and perform line search\\
\hspace{\algorithmicindent}3. Update $z^{(0)}, \alpha$
\EndWhile
\end{algorithmic}
\end{algorithm}

If, as discussed at the end of Section \ref{sec:relaxed}, the minimization is run with the constraint $z_k^{(0)} = - \alpha_k \nabla q^{(0)}(x_k^{(0)})$, the gradients obtained at step 1.2 of  Algorithm \ref{alg.shooting} only have to be combined into $\tilde \eta_{k}(0) = \eta_{k}(0) -\nabla q^{(0)}(x_k^{(0)})\cdot \xi_{z,k}(0)$ in order to update $\alpha$. Note also that the obtained derivatives, $\xi_z(0)$ and $\eta(0)$ (or $\tilde \eta(0)$) can be conditioned according to their natural inner product before performing step 2, using the linear transformation $\eta(0) \mapsto \mathcal K_H(x^{(0})^{-1} \eta(0)$ and $\xi_z(0) \mapsto
\mathcal K_V(x^{(0})^{-1} \xi_z(0)$, where $\mathcal K_H(x^{(0})$ is the matrix with entries $K_H(x_k^{(0)}, x_l^{(0)})$ and $\mathcal K_V(x^{(0})$ is formed similarly
with $d$ by $d$ blocks $K_V(x_k^{(0)}, x_l^{(0)})$.

\section{Numerical Experiments}
\label{sec:experiments}

We now illustrate our method with some simple numerical experiments.  We used Python for our 
implementation, making extensive use of the open source packages Numpy, Scipy, and the f2py tool to integrate Fortran and Python \cite{scipy}.  
The results in the examples below are visualized using Paraview \cite{paraview}.

For all numerical results, we use
\[
K_V(x,y) = (1 + u + 3u^2/7 + 2u^3/21 + u^4/105)\, e^{-u}. \mathrm{Id}_{\mR^d}
\]
and
\[
K_H(x,y) = (1 + \tilde u + \tilde u^2/3)\, e^{-\tilde u}
\]
with $u = |x-y|/\tau_V$ and $\tilde u = |x-y|/\tau_H$, where $\tau_V$ and $\tau_H$ are width parameters associated to the reproducing kernels. These kernels provide RKHS's equivalent to Sobolev spaces $H^k(\mR^d, \mR^d)$ and $H^r(\mR^d)$ with $k=(9+d)/2$ and $r=(5+d)/2$, yielding respective inclusions in $\cB^4$ and $C^2_0(\mR^d)$. All experiments are discretized on
a 2D grid with isotropic resolution $\Delta x_1 = \Delta x_2 = 1$.\\

The first examples match images from the training set in the MNIST character recognition database: the letter ``D'' and the digit ``8''. 
We use a discrete square with $72^2$ points and a time discretization $\Delta t = 0.1$ (10 timesteps). Images from the character database are 
upsampled  at the sampling rate for this grid. We used $\tau_V = 1.5$ and $\tau_h = 0.5$.  Figure \ref{fig.d72} illustrates the matching of two versions of the 
letter $D$ (bottom row at left, to bottom row at right).  The top row shows the optimal evolution of the template $m(t)$, while the bottom row shows the 
evolution of the deformed template $q(t) = m(t) \circ \varphi(t)^{-1}$.  Figure \ref{fig.eight40} shows matching of versions of the digit eight (top left to bottom right), 
along with the deformed gridlines to visualize the minimizing deformation.

In Figure \ref{fig.leaf100}, we show the metamorphosis of two leaves from the LeafSnap database \cite{leafsnap}, after downsampling the images to a grid of $100^2$
and converting to grayscale images.  Here, $\tau_V = 3.0$ and $\tau_h = 0.5$.

Figure \ref{fig.b_momenta} shows the minimizing momenta $\alpha$ when matching the image on the top row to each of the seven images of the final row (which shows the
final morphed image); the second row is an intensity map of the momenta.  On the linear space of momenta, we can take linear combinations, as depicted in Figure \ref{fig.avg_b_momenta}; this allows us to generate random images based on the ones obtained in Figure \ref{fig.b_momenta}, by solving \eqref{eq:disc.syst} with initial momentum
\begin{equation}
\label{eq:rand.mom}
\alpha(0) = \bar\alpha_0 + \frac{c}{\sqrt n}\sum_{k=1}^7 \xi_k(\alpha_{0,k} - \bar\alpha_0)
\end{equation}
where $\xi_1, \ldots, \xi_7$ are independent standard Gaussian random variables, $\alpha_{0,k}$ is the initial momentum obtained for the $k$th image in Figure \ref{fig.b_momenta} and $\bar\alpha_0$ is their average. The covariance structure of the resulting random momentum $\alpha(0)$ coincides with the empirical covariance estimated from the seven examples. 

\begin{figure}[h]
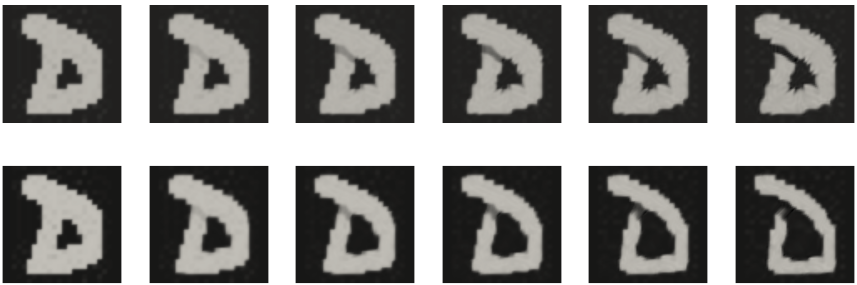

\begin{center}
\includegraphics[width=.15\textwidth]{d72_c_m_0000.png}
\includegraphics[width=.15\textwidth]{d72_c_m_0002.png}
\includegraphics[width=.15\textwidth]{d72_c_m_0004.png}
\includegraphics[width=.15\textwidth]{d72_c_m_0006.png}
\includegraphics[width=.15\textwidth]{d72_c_m_0008.png}
\includegraphics[width=.15\textwidth]{d72_c_m_0010.png}
\end{center}
\begin{center}
\includegraphics[width=.15\textwidth]{d72_c_meta_0000.png}
\includegraphics[width=.15\textwidth]{d72_c_meta_0002.png}
\includegraphics[width=.15\textwidth]{d72_c_meta_0004.png}
\includegraphics[width=.15\textwidth]{d72_c_meta_0006.png}
\includegraphics[width=.15\textwidth]{d72_c_meta_0008.png}
\includegraphics[width=.15\textwidth]{d72_c_meta_0010.png}
\end{center}
\caption{Morphing of letter D from MNIST training set: top row shows evolution of the template;
bottom row shows evolution of deformed template.}
\label{fig.d72}
\end{figure}

\begin{figure}[h]
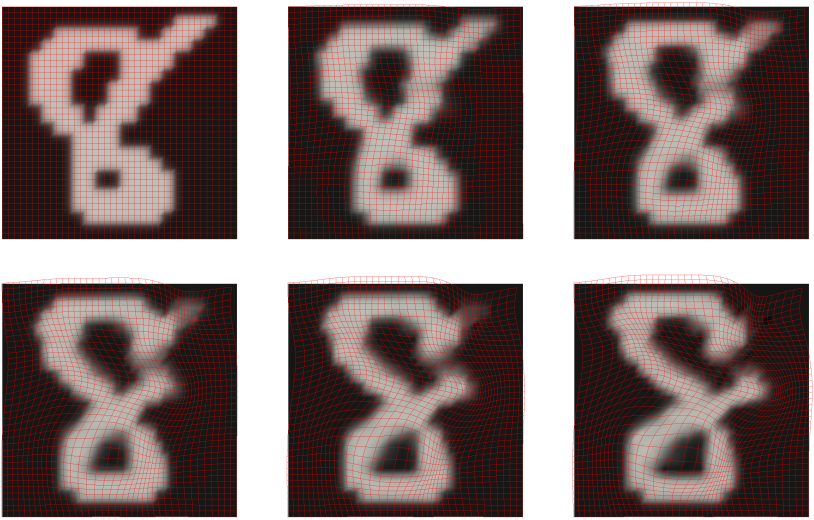

 \begin{center}
\includegraphics[width=0.3\textwidth]{eight_c_meta_0000.png}
\includegraphics[width=0.3\textwidth]{eight_c_meta_0002.png}
\includegraphics[width=0.3\textwidth]{eight_c_meta_0004.png}\\
\includegraphics[width=0.3\textwidth]{eight_c_meta_0006.png}
\includegraphics[width=0.3\textwidth]{eight_c_meta_0008.png}
\includegraphics[width=0.3\textwidth]{eight_c_meta_0010.png}
\end{center}
\caption{Morphing of smoothed version of digit 8 from MNIST training set, 
where the coordinate grid is warped by the diffeomorphism and illustrated with grid lines.}
\label{fig.eight40}
\end{figure}

\begin{figure}[h]
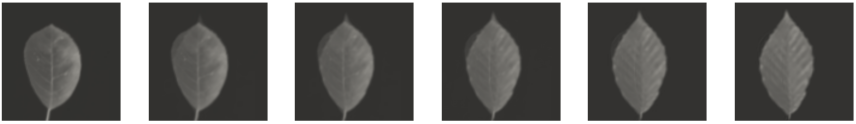

 \begin{center}
 \includegraphics[width=0.15\textwidth]{leaves0000.png}
 \includegraphics[width=0.15\textwidth]{leaves0002.png}
 \includegraphics[width=0.15\textwidth]{leaves0004.png}
 \includegraphics[width=0.15\textwidth]{leaves0006.png}
 \includegraphics[width=0.15\textwidth]{leaves0008.png}
 \includegraphics[width=0.15\textwidth]{leaves0010.png}

\end{center}
\caption{Matching of two leaves from LeafSnap database using metamorphosis.}
\label{fig.leaf100}
\end{figure}

\begin{figure}[h]
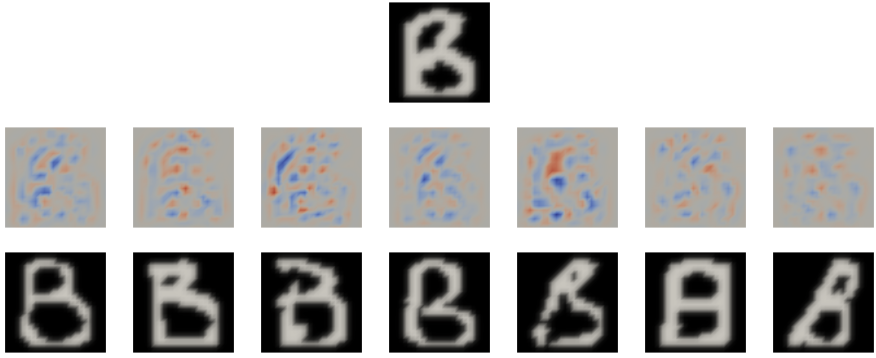

 \begin{center}
\includegraphics[width=.13\textwidth]{letterT.png}\\[.25\baselineskip]
\includegraphics[width=.13\textwidth]{alpha0.png}
\includegraphics[width=.13\textwidth]{alpha5.png}
\includegraphics[width=.13\textwidth]{alpha10.png}
\includegraphics[width=.13\textwidth]{alpha15.png}
\includegraphics[width=.13\textwidth]{alpha20.png}
\includegraphics[width=.13\textwidth]{alpha25.png}
\includegraphics[width=.13\textwidth]{alpha30.png}
\\[.25\baselineskip]
\includegraphics[width=.13\textwidth]{letter0.png}
\includegraphics[width=.13\textwidth]{letter5.png}
\includegraphics[width=.13\textwidth]{letter10.png}
\includegraphics[width=.13\textwidth]{letter15.png}
\includegraphics[width=.13\textwidth]{letter20.png}
\includegraphics[width=.13\textwidth]{letter25.png}
\includegraphics[width=.13\textwidth]{letter30.png}
\end{center}
\caption{Momentum field ($\alpha$) for matching template to several targets for letter B in MNIST training set;
figure in top row is the image chosen to be the template; second row shows $\alpha$ with color intensity indicating magnitude, 
negative values of $\alpha$ are colored blue, positive are red; third row shows final morphed image shooting from the $\alpha$ above.}
\label{fig.b_momenta}
\end{figure}

\begin{figure}[h]
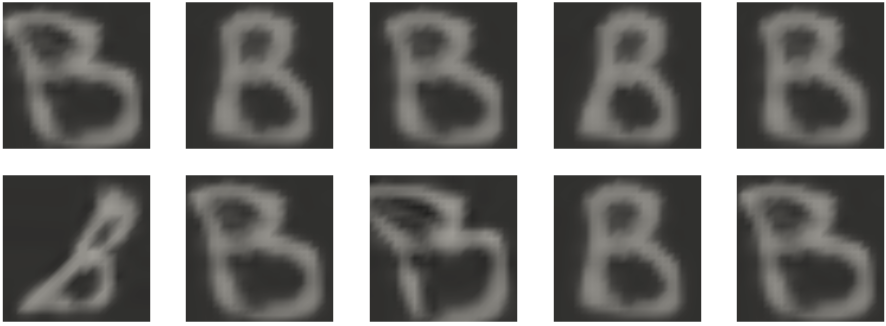

 \begin{center}
 \includegraphics[width=0.19\textwidth]{shoot0.png}
 \includegraphics[width=0.19\textwidth]{shoot1.png}
 \includegraphics[width=0.19\textwidth]{shoot2.png}
 \includegraphics[width=0.19\textwidth]{shoot3.png}
 \includegraphics[width=0.19\textwidth]{shoot4.png}\\
 \includegraphics[width=0.19\textwidth]{shoot5.png}
 \includegraphics[width=0.19\textwidth]{shoot6.png}
 \includegraphics[width=0.19\textwidth]{shoot7.png}
 \includegraphics[width=0.19\textwidth]{shoot8.png}
 \includegraphics[width=0.19\textwidth]{shoot9.png}
\end{center}
\caption{Result of shooting with random momenta (described in equation \eqref{eq:rand.mom}) learned from letter matching. The figure provides ten independent samples.}
\label{fig.avg_b_momenta}
\end{figure}

\section{Rigorous Results}
\label{sec:proofs}

\subsection{Notation and Preliminary Results}
We first recall our main assumptions. Images ($m$ or $q$) belong to a Hilbert space  $H$, with norm equivalent to the $H^r(\mR^d)$ norm for some integer $r\geq 0$, with notation for the $H^r$ norm
\[
\|u\|_{r,2}^2 = \sum_{|\alpha| \leq r} \|\partial_\alpha u\|_2^2
\]
where $\alpha$ denotes a $d$-dimensional multi-index $(\alpha_1,\ldots,\alpha_d)$, $|\alpha|=\alpha_1+\cdots+\alpha_d$,
\[
\partial_\alpha u = \frac{\partial^{|\alpha|} u}{\partial^{\alpha_1}x_1\ldots\partial^{\alpha_d}x_d}
\]
and $\|\ \|_2$ is the $L^2$ norm.   We will use the usual notation $H^r(\mR^d)^* = H^{-r}(\mR^d)$. Most of the time, we will assume that $r>d/2 + k$ for some $k\geq 0$, which implies \cite{adams2003sobolev,brezis2011functional} that $H$ is continuously embedded in the space $C^k_0(\mR^d)$ of $k$-times continuously differentiable functions that vanish at infinity, together with their first $k$ derivatives, with norm
\[
\|u\|_{k,\infty} = \sum_{|\alpha| \leq k} \|\partial_\alpha u\|_{\infty}.
\]\\

We have denoted $\cB^p$ the space $C^p_0(\mR^d, \mR^d)$, with norm $\| \  \|_{p, \infty}$, and we will denote $\| \ \|_{p, \infty, *}$ the associated norm on the dual space $(\cB^p)^*$. We will assume that $V$  is a Hilbert space which is continuously embedded in $\cB^p$, with $p\geq \max(r,1)$ at least, and $p\geq r+1$ most of the time. If $v\in L^2([0,1], \cB^p)$ (which contains $L^2([0,1], V)$), the associated flow, $\varphi^v(s,t, \cdot)$, solution of $\partial_t \varphi^v = v(t, \varphi^v)$ with $\varphi^v(s,s,x)=x$ takes values in $\Diff^p(\mR^d)$, the group of diffeomorphisms $\psi$  such that $\psi-\id$ and $\psi^{-1}-\id$ both belong to $\cB^p$. More precisely\cite{agrachev2004control,ty_2005,younes_book_2010}, there exists a continuous function $c$ such that, for all $s,t\in [0,1]$,
\[
\|\varphi^v(s,t, \cdot) -\id\|_{p,\infty}  \leq c\left(\|v\|_{L^2([s,t], \cB^p)}\right)\|v\|_{L^2([s,t], \cB^p)}\,.
\]
In the following, we will use the generic notation $c(\cdot)$ to represent some continuous function of its arguments (the actual function can change from an equation to another, even if we still denote it $c$). The notation $\text{cst}$ will denote a generic constant.

The mapping $v \mapsto \varphi^v(s,t, \cdot)$ is differentiable from $L^2([0,1], \cB^p)$ to $\Diff^{p-1}$ with derivative
\[
\partial_v \varphi^v(s,t, \cdot) . h = \int_s^t D\varphi^v(u,t, \varphi^v(s,u, \cdot)) h(u, \varphi^v(s,u,\cdot)) du.
\]
Moreover, one can show that, if $v, \tilde v\in L^2([s,t], \cB^p)$, then
\[
\|\varphi^v(s,t, \cdot) -\varphi^{\tilde v}(s,t, \cdot)\|_{p-1,\infty}  \leq c\left(\|v\|_{L^2([s,t], \cB^p)}, \|\tilde v\|_{L^2([s,t], \cB^p)}\right)\|v-\tilde v\|_{L^2([s,t], \cB^p)}\,.
\]
Note that $\|v\|_{L^2([s,t], \cB^p)}$ is bounded, up to a multiplicative constant, by $\|v\|_{L^2([s,t], V)}$. 

Finally, we note that weak convergence of a sequence $v_n$ to a limit $v$ in $L^2([0,1], V)$ implies that $\varphi^{v_n}$ converges to $\varphi^v$ in the $(p, \infty)$ norm  over compact subsets of $\mR^d$ \cite{dupuis_miller_1998,younes_book_2010}. 

To simplify our expressions, we will simply denote $\varphi(t, x) = \varphi(0,t, x)$ when $s=0$.\\

We let $\Diff_V\subset \Diff^p$ denote the group of diffeomorphisms that can be obtained from flows associated to some $v\in L^2([0,1], V)$. For $\psi\in \Diff_V$, we  introduced the translation operators $\bT_\psi: V \to \cB^p$ and $\tilde \bT_\psi: H \to H$ defined by $\bT_\psi v = v\circ \psi$ and $\tilde \bT_\psi h = h\circ \psi$. The fact that $\tilde \bT_\psi$ maps $H$ onto itself (with $(\tilde \bT_\psi)^{-1} = \tilde \bT_{\psi^{-1}}$) is a consequence of $H$ being equivalent to $H^r(\mR^d)$ and of $p\geq r$ (see justification below).  The following lemma, which can be proved by induction, describes how $\tilde \bT_\psi$ commutes with partial derivatives.
\begin{lemma}
\label{lem:diff.comp}
Let $\alpha$ be a multi-index. Assume that $z:\mR^d \to\mR$ has at least $|\alpha|$ continuous derivatives, and let $\psi\in \Diff^p$ with $p\geq |\alpha|$.
One can write $\partial_\alpha(z\circ \psi^{-1})\circ \psi$ in the form
\[
\partial_\alpha(z\circ \psi^{-1})\circ \psi(y) = \sum_{\beta \leq \alpha} Q^\alpha_\beta(\psi)(y) \partial_\beta z(y)
\]
where $Q^\alpha_\beta (\psi)(y)$ depends on derivatives of $\psi$ at $y$, and can be written as a sum of terms
\[
\sigma(D\psi) (\partial_{\gamma_1} \psi_{j_1})^{\ell_1}\cdots (\partial_{\gamma_k} \psi_{j_k})^{\ell_k}
\]
with $|\gamma_q| > 1$ for $q=1, \ldots, k$ and $|\beta| + \sum_{q=1}^k \ell_q (|\gamma_q|-1) \leq |\alpha|$. In this expression, $\psi_j$ denotes the $j$th coordinate of $\psi$ and $\sigma$ is a continuous function of $D\psi$, which can be expressed as the ratio of a polynomial in the coefficients of $D\psi$ divided by $|\det D\psi|$ to some power.
\end{lemma}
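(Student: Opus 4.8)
The plan is to prove the representation by induction on the order $|\alpha|$, the single engine being the chain rule together with the identity $(\partial_i \psi^{-1}_j)\circ\psi = [(D\psi)^{-1}]_{ji}$ obtained by differentiating $\psi^{-1}\circ\psi = \id$. I read the index set ``$\beta\le\alpha$'' as $|\beta|\le|\alpha|$; this is in any case subsumed by the degree inequality $|\beta|+\sum_q\ell_q(|\gamma_q|-1)\le|\alpha|$, and one checks a posteriori that the coefficient attached to $\beta=0$ vanishes whenever $|\alpha|\ge1$ (both families of terms below either strictly raise $|\beta|$ or differentiate the constant $Q^0_0=1$).

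For $|\alpha|=0$ the left-hand side is $z$ and the claim holds with $Q^0_0=1$ (empty product, $k=0$). For the inductive step, set $g=\partial_\alpha(z\circ\psi^{-1})$ and $G=g\circ\psi$, so that by hypothesis $G=\sum_{\beta}Q^\alpha_\beta(\psi)\partial_\beta z$. Writing $g=G\circ\psi^{-1}$, differentiating once more, composing with $\psi$, and using the identity above gives
\[
\partial_{\alpha+e_i}(z\circ\psi^{-1})\circ\psi=\sum_{j=1}^d [(D\psi)^{-1}]_{ji}\,\partial_j\Big(\sum_\beta Q^\alpha_\beta(\psi)\,\partial_\beta z\Big).
\]
Expanding $\partial_j$ by the product rule produces exactly two families of terms: one proportional to $\partial_{\beta+e_j}z$ with coefficient $[(D\psi)^{-1}]_{ji}\,Q^\alpha_\beta$, and one proportional to $\partial_\beta z$ with coefficient $[(D\psi)^{-1}]_{ji}\,\partial_j Q^\alpha_\beta$. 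Collecting by the derivative of $z$ that each term carries defines the new coefficients $Q^{\alpha+e_i}_{\beta'}$ with $|\beta'|\le|\alpha|+1$.

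It remains to check structure-preservation and the degree bound. Attach to each monomial $\sigma(D\psi)\prod_q(\partial_{\gamma_q}\psi_{j_q})^{\ell_q}$ multiplying $\partial_\beta z$ the weight $W=|\beta|+\sum_q\ell_q(|\gamma_q|-1)$; the inductive claim is $W\le|\alpha|$. The entry $[(D\psi)^{-1}]_{ji}$ is a cofactor (a polynomial in the first derivatives $\partial_m\psi_n$) divided by $\det D\psi$; the cofactor factors have $|\gamma|=1$, hence contribute $0$ to $W$, and they together with $1/\det D\psi$ are absorbed into $\sigma$, which thereby stays a polynomial in the entries of $D\psi$ over a power of $|\det D\psi|$ (the quotient rule keeps this form under further differentiation). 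Thus in the first family only $|\beta|$ rises, by one, so $W\mapsto W+1$. In the second family $|\beta|$ is unchanged while $\partial_j$ either hits $\sigma(D\psi)$, producing via the chain rule a factor $\partial_{e_j+e_m}\psi_n$ with $|\gamma|=2$ and raising $W$ by $1$, or hits some $(\partial_{\gamma_q}\psi_{j_q})^{\ell_q}$, replacing it by $(\partial_{\gamma_q}\psi_{j_q})^{\ell_q-1}\partial_{\gamma_q+e_j}\psi_{j_q}$, whose weight contribution changes from $\ell_q(|\gamma_q|-1)$ to $\ell_q(|\gamma_q|-1)+1$. In every case $W$ increases by exactly one, giving $W\le|\alpha|+1=|\alpha+e_i|$, and every newly created $\gamma$-factor has $|\gamma|\ge2$, so the constraint $|\gamma_q|>1$ persists.

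The argument is combinatorial rather than analytic, and the only real work is the weight bookkeeping of the last paragraph: confirming that each differentiation outcome raises $W$ by exactly one and that the first-derivative factors coming from the inverse Jacobian never inflate it. The smoothness hypotheses are precisely what legitimize the manipulations, since the degree bound forces $|\gamma_q|\le|\alpha|$, so only derivatives of $\psi$ up to order $|\alpha|\le p$ and of $z$ up to order $|\alpha|$ occur; both are covered by $\psi\in\Diff^p$ with $p\ge|\alpha|$ and by the assumed $|\alpha|$ continuous derivatives of $z$. I expect no genuine obstacle beyond keeping this accounting consistent.
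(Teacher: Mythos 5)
Your proof is correct and follows exactly the route the paper indicates: the paper gives no detailed argument, stating only that the lemma ``can be proved by induction'' (deferring to the cited literature), and your induction on $|\alpha|$ --- chain rule plus the identity $(D\psi^{-1})\circ\psi=(D\psi)^{-1}$, with the weight bookkeeping $W=|\beta|+\sum_q\ell_q(|\gamma_q|-1)$ increasing by exactly one per differentiation --- supplies precisely that argument. Your observation that $\beta\leq\alpha$ must be read as $|\beta|\leq|\alpha|$ is also a correct and worthwhile clarification, since the componentwise reading fails already at $|\alpha|=1$, where every first derivative $\partial_j z$ appears via the inverse Jacobian.
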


This result (or a similar version of it) can be found in many places in the literature: see \cite{ebin1970groups,ebin1970manifold,inci2013regularity} and their references. This lemma implies, in particular, that 
\[
\sum_{|\alpha|\leq r} |\partial_\alpha \tilde \bT_{\psi^{-1}} z|^2 \leq c(\|\psi-\id\|_{p, \infty}) \sum_{|\alpha|\leq r} \tilde \bT_{\psi^{-1}} |\partial_\alpha  z|^2,
\]
from which one obtains the continuity of $\tilde \bT_{\psi^{-1}}$, with the operator norm $\|\tilde \bT_{\psi^{-1}}\|_{\mathcal L(H,H)}$   a continuous function of $\|\psi-\id\|_{p, \infty}$. \\

We will use the following result. Assume that $\psi_n$ is a sequence of diffeomorphisms of $\mR^d$ that converges pointwise to a diffeomorphism $\psi$, and such that $\|\psi_n^{-1}\|_{1, \infty}$ is bounded. Then, for any $z\in L^2(\mR^d)$, $z\circ \psi_n$ converges in $L^2$ to $z\circ\psi$. This can be proved by using the fact that for any $\varepsilon>0$, one can find a compact subset of $\mR^d$, $A_\varepsilon$, such that $z$ is continuous on $A_\varepsilon$, $A_\varepsilon^c = \mR^d\setminus A_\varepsilon$ has measure less than $\varepsilon$ and $\|z\mathbf 1_{A_\varepsilon^c}\|_2 \leq \varepsilon$. Assume without loss of generality that $\psi = \id$ and write
\[
\|z\circ \psi_n - z\|^2_2 = \int_{\mR^d} z^2 (|\det D(\psi_n^{-1})| -1) dx + \int_{A_{\varepsilon}} z(z\circ \psi_n-z) dx
+\int_{A^c_{\varepsilon}} z(z\circ \psi_n-z) dx.
\]
The last integral is less than $\|z\circ \psi_n-z\|_2 \|z\mathbf 1_{A_\varepsilon^c}\|_2\leq \mathrm{cst}.\|z\|_2 \varepsilon$ and the rest can be made arbitrarily small by letting $n$ go to infinity.

This result combined with Lemma \ref{lem:diff.comp} implies that, if $z\in H^r(\mR^d)$, then $\psi \mapsto \tilde \bT_\psi z$ is continuous in $\psi$ as a function from $\Diff^p$ ($p\geq r$) to $H^r(\mR^d)$ (or $H$). More generally,  if $\psi_n\in \Diff^p$ and its $p$ first derivatives converge to those of $\psi\in \Diff^p$ pointwise, with $\|\psi_n^{-1}\|_{p, \infty}$ bounded, then $\tilde \bT_{\psi_n} z$ converges to $\tilde \bT_\psi z$  in  $H^r(\mR^d)$. Finally
$\tilde \bT_\psi$ is, in addition, differentiable in $\psi$ in the following setting. If $z \in H^{r+1}(\mR^d)$, then $\psi \mapsto \tilde \bT_\psi z$ is differentiable, as a function from $\Diff^p$ to $H$, with differential $u \mapsto \nabla z \circ \psi \cdot u$. 
Indeed, starting
with a smooth $z$, one writes
\[
\tilde \bT_{\psi+u} z - \tilde \bT_{\psi} z -(\nabla z \circ \psi)\cdot u = \int_0^1 (\nabla z \circ (\psi+\varepsilon u) - \nabla z \circ \psi) \cdot u d\varepsilon.
\]
An application of Leibnitz formula yields
\[
\|(\nabla z \circ (\psi+\varepsilon u) - \nabla z \circ \psi)\cdot u\|_{r,2} \leq \text{cst} \|\nabla z \circ (\psi+\varepsilon u) - \nabla z \circ \psi\|_{r,2} \|u\|_{r, \infty}
\]
yielding
\[
\|\tilde \bT_{\psi+u} z - \tilde \bT_{\psi} z -(\nabla z \circ \psi)\cdot u\|_{r,2} \leq \text{cst} \|u\|_{r, \infty} \int_0^1 \|\nabla z \circ (\psi+\varepsilon u) - \nabla z \circ \psi\|_{r,2} d\varepsilon,
\]
which can be extended to arbitrary $z\in H^{r+1}(\mR^d)$ by density. The conclusion then follows from the continuity of $\psi \mapsto \nabla z \circ \psi$ as an $H^r(\mR^d, \mR^d)$-valued mapping, since $\nabla z\in H^r(\mR^d, \mR^d)$. From this, it also follows that $\psi \mapsto \tilde \bT_\psi^* \rho$ is differentiable in $\psi$, for the $H^{-r}(\mR^d)$ norm, as soon as $\rho \in H^{1-r}(\mR^d)$.  \\

We will also be interested, for $\psi\in G_V$, in the operator $\bL_\psi = \tilde \bT_\psi^* \bA_H \tilde \bT_\psi$, where $\bA_H$ is, as before, the duality isometry from $H$ to $H^*$, with inverse $\bK_H$. $\bL_\psi$ provides a bounded invertible mapping from $H$ to $H^*$, and one has
\[
\lform{\bL_{\psi^{-1}} z}{z} = \|z\circ \psi^{-1}\|^2_H.
\]
Note that $\|\bL_\psi\|_{\mathcal L(H, H^*)} \leq \|\tilde \bT_\psi\|^2_{\mathcal L(H)}$ and, using  $\bL_\psi^{-1} = \tilde \bT_{\psi^{-1}} \bK_H \tilde \bT_{\psi^{-1}}^*$, $\|\bL_\psi^{-1}\|_{\mathcal L(H^*, H)} \leq \|\tilde \bT_{\psi^{-1}}\|^2_{\mathcal L(H)}$. More generally, if $\psi\in \cB^{r+k}$, then $\bL_{\psi}$ maps $H^{r+k}(\mR^d)$ to $H^{k-r}(\mR^d)$ and $\|\bL_\psi\|_{\mathcal L(H^{r+k}, H^{k-r})} \leq \text{cst} \|\tilde \bT_\psi\|_{\mathcal L(H^{r+k})} \|\tilde \bT_\psi\|_{\mathcal L(H^{r-k})}$. Similarly, $\bL^{-1}_{\psi}$ maps $H^{k-r}(\mR^d)$ to $H^{r+k}(\mR^d)$ with $\|\bL_\psi\|_{\mathcal L(H^{k-r}, H^{r+k})} \leq \text{cst} \|\tilde \bT_{\psi^{-1}}\|_{\mathcal L(H^{r+k})} \|\tilde \bT_{\psi^{-1}}\|_{\mathcal L(H^{r-k})}$.

From the differentiability of $\tilde \bT_\psi$ and $\tilde \bT_\psi^*$, one obtains the fact that $\bL_\psi z$ and $\bL_\psi^{-1}\rho$ are differentiable in $\psi$ as soon as $z\in H^{r+1}(\mR^d)$ and $\rho\in H^{1-r}(\mR^d)$ (note that $\bA_H$ maps $H^{r+1}$ onto $H^{1-r}$). One can go a little further by assuming that $p\geq r+1$ and that the norm on $H$ results from a differential operator, i.e.,
\[
\|z\|_H^2 = \Big \|\sum_{|\alpha| \leq r} b_{\alpha} \partial_\alpha z\Big\|_2^2
\]
for some coefficients $b_\alpha$. One has, in this case,
\[
\|z\circ \psi^{-1}\|_H^2 = \int_{\mR^d} \Big(\sum_{|\alpha| \leq r} b_{\alpha}\circ \psi\, \partial_\alpha (z\circ \psi^{-1})\circ \psi\Big)^2 |\det D\psi| dy
\]
and using Lemma \ref{lem:diff.comp} to expand the partial derivatives, one sees that the integrand can be written as a polynomial in the partial derivatives of $z$, with coefficients expressed as smooth functions of $\psi$ and its first $r$ derivatives. From this, one concludes that $\bL_{\psi^{-1}}$ is differentiable in $\psi^{-1}$ for the $\mathcal L(H, H^*)$ operator norm, and so is the inverse map $\bL_{\psi^{-1}}^{-1}$.     \\

Finally, let $\varphi: [0,1] \to G_V$ be a continuous mapping (e.g., $\varphi=\varphi^v$ for some $v\in L^2([0,1], \mR^d)$). Define the operator
\[
\bR_\varphi = \int_0^1 \bL_{\phi(t)^{-1}}^{-1} dt = \int_0^1 \tilde \bT_{\varphi(t)} \bK_H \tilde \bT_{\varphi(t)}^* dt,
\]
defined on $H^*$, with values in $H$. 
This operator is continuous in $\varphi$ (for $\|\varphi\| = \sup_{t\in [0,1]} \|\varphi\|_{p,\infty}$), and is invertible. To prove the last statement, first notice that $\bR_\varphi$ has closed range. Indeed, if $\bR_\varphi \rho_n \to \xi$, then $\rho_n$ is bounded because
\begin{equation}
\label{eq:Rphi.lb}
\lform{\rho_n}{\bR_\varphi \rho_n} = \int_0^1 \|\tilde\bT_{\varphi(t)}^* \rho_n\|_{H^*}^2 dt \geq \left(\int_0^1 \|\tilde\bT_{\varphi(t)^{-1}}\|_{\mathcal L(H)}^{-2} dt\right) \|\rho_n\|_{H^*}^2
\end{equation}
so that 
\[
\|\rho_n\|_{H^*} \leq  \|\bR_\varphi \rho_n\|_H \left( \int_0^1 \|\tilde\bT_{\varphi(t)^{-1}}\|_{\mathcal L(H)}^{-2} dt\right)^{-1}.
\]
This implies that $\rho_n$ 
has a weakly converging subsequence in $H^*$, say $\rho_n \rightharpoonup \rho$, which implies $\bR_\varphi \rho_n \rightharpoonup \bR_\varphi \rho$ so that $\xi=\bR_\varphi\rho$. Thus, $\bR_\varphi$ is one-to-one and has closed range, which implies that it is $\bR_\varphi$ is invertible. \\

From \eqref{eq:Rphi.lb} and a similar upper bound for the inverse, we obtain the fact that $\|\bR_\varphi\|_{\mathcal L(H^*,H)}$ and $\|\bR_\varphi^{-1}\|_{\mathcal L(H, H^*)}$ are bounded by continuous functions of $\varphi$. From this, and the identity $\bR_{\varphi}^{-1} - \bR_{\varphi'}^{-1} = \bR_{\varphi}^{-1}(\bR_{\varphi'} - \bR_\varphi)\bR_{\varphi'}^{-1}$, it follow that $\bR_\varphi^{-1}$ is also continuous in $\varphi$. The differentiability of $\bR_{\varphi}$ in $\varphi$ comes from the differentiability  of $\bL_\psi^{-1}$, so that 
$\varphi \mapsto \bR_\varphi \rho$ is differentiable as soon as $\rho\in H^{1-r}(\mR^d)$. This statement holds also for $\rho\in H^{-r}(\mR^d)$ if $\|\ \|_H$ is associated to a differential operator.  From these results and the continuity of the inverse map, one also concludes that $\bR_{\varphi}^{-1} z$ is differentiable in $\varphi$ if $z\in H^{r+1}(\mR^d)$ (or $H^r(\mR^d)$ if $\|\ \|_H$ is associated to a differential operator).

\subsection{Existence of Solutions of the Boundary-Value Problem}
We start with the existence of solutions for Problems \eqref{eq:meta.cost.2} and \eqref{eq:meta.cost.3}. 

\begin{theorem}
\label{th:exist}
Assume $r > d/2$ and $p\geq \max(1,r)$. Then
Problems \eqref{eq:meta.cost.2} and \eqref{eq:meta.cost.3} have non-empty sets of solutions.

Let $x^{(0,n)} = \{x_k^{(0,n)}\}_{k=1}^{N_n}$ be nested sets of points in $\mR^d$ such that $\bigcup_n x^{(0,n)}$ is dense in $\mR^d$. Let $(v^{(n)}, \zeta^{(n)}, \varphi^{(n)}, m^{(n)})$ be solutions of Problem \eqref{eq:meta.cost.3} with $x^{(0)} = x^{(0,n)}$. Then, possibly after replacing them with subsequences, both $v^{(n)}$ and $\zeta^{(n)}$ weakly converge to limits $v$ and $\zeta$, while $\varphi^{(n)}$ and $m^{(n)}$ converge pointwise to the corresponding $\varphi$ and $m$ such that $(v, \zeta, \varphi, m)$ is a solution of \eqref{eq:meta.cost.2}.
\end{theorem}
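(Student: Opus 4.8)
The plan is to treat both existence statements by the direct method of the calculus of variations, and then to obtain the convergence of the relaxed solutions by combining the fact that \eqref{eq:meta.cost.3} is a genuine relaxation of \eqref{eq:meta.cost.2} with a density argument on the constraint. Throughout I write $J(v,\zeta) = \frac12\int_0^1\|v\|_V^2\,dt + \frac1{2\sigma^2}\int_0^1\|\zeta\|_H^2\,dt$ for the common cost. Both problems are feasible with finite cost: taking $v\equiv 0$ and $\zeta\equiv q^{(1)}-q^{(0)}\in H$ gives $\varphi\equiv\id$ and $m(1)=q^{(1)}$, which satisfies the endpoint constraints of both \eqref{eq:meta.cost.2} and \eqref{eq:meta.cost.3}, so the infima $c_*$ and $c_n$ are finite. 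Since $J$ is coercive on the Hilbert space $L^2([0,1],V)\times L^2([0,1],H)$, any minimizing sequence $(v^{(n)},\zeta^{(n)})$ is bounded there, and by reflexivity I may extract a subsequence with $v^{(n)}\rightharpoonup v$ and $\zeta^{(n)}\rightharpoonup\zeta$. The bound on $\|v^{(n)}\|_{L^2([0,1],V)}$ controls $\|v^{(n)}\|_{L^2([0,1],\cB^p)}$ and hence, through the estimate recalled above, the norms $\|\varphi^{(n)}(t,\cdot)-\id\|_{p,\infty}$ and $\|\varphi^{(n)}(t,\cdot)^{-1}-\id\|_{p,\infty}$; by the stated weak-continuity of the flow map, $\varphi^{(n)}\to\varphi=\varphi^v$ in the $(p,\infty)$ norm on compact sets, together with its first $p$ spatial derivatives.

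It remains, for existence, to pass to the limit in the endpoint constraint and to invoke lower semicontinuity. Writing $m^{(n)}(1)=q^{(0)}+\int_0^1\zeta^{(n)}(s)\circ\varphi^{(n)}(s)\,ds\in H$ and using that $\zeta^{(n)}(s,y)=\scp{\zeta^{(n)}(s)}{K_H(\cdot,y)}_H$, I obtain
\[
m^{(n)}(1,x)=q^{(0)}(x)+\int_0^1\scp{\zeta^{(n)}(s)}{K_H(\cdot,\varphi^{(n)}(s,x))}_H\,ds .
\]
Splitting the difference with the candidate limit into a term where only the evaluation point moves and a term where only $\zeta^{(n)}$ varies, the first is controlled by Cauchy--Schwarz together with $\|\zeta^{(n)}\|_{L^2([0,1],H)}\le\text{cst}$ and the continuity of $y\mapsto K_H(\cdot,y)\in H$ applied along $\varphi^{(n)}(s,x)\to\varphi(s,x)$ (dominated in $s$), while the second tends to $0$ by weak convergence of $\zeta^{(n)}$ tested against the fixed element $s\mapsto K_H(\cdot,\varphi(s,x))$ of $L^2([0,1],H)$. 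Thus $m^{(n)}(1,x)\to m(1,x)$ at every $x$, with $m(1)=q^{(0)}+\int_0^1\zeta(s)\circ\varphi(s)\,ds\in H$. For \eqref{eq:meta.cost.2} the feasible points satisfy $m^{(n)}(1,x)=q^{(1)}(\varphi^{(n)}(1,x))$ for all $x$, and letting $n\to\infty$ (the right-hand side converging by continuity of $q^{(1)}$ and of the flow) gives $m(1,x)=q^{(1)}(\varphi(1,x))$ for all $x$, i.e. $m(1)=q^{(1)}\circ\varphi(1)$ in $H$. Weak lower semicontinuity of $J$ then yields $J(v,\zeta)\le\liminf J(v^{(n)},\zeta^{(n)})=c_*$, so the feasible limit is a minimizer. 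For \eqref{eq:meta.cost.3} the same scheme applies, the constraint being imposed only at the finitely many points $x_k^{(0)}$, whose images converge by flow convergence.

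For the convergence of relaxed solutions, observe first that \eqref{eq:meta.cost.3} with point set $x^{(0,n)}$ is a relaxation of \eqref{eq:meta.cost.2}: the full constraint $m(1)=q^{(1)}\circ\varphi(1)$ implies its evaluations at the $x_k^{(0,n)}$, so every competitor of \eqref{eq:meta.cost.2} competes in \eqref{eq:meta.cost.3}, whence $c_n\le c_*$ for all $n$. In particular the solutions $(v^{(n)},\zeta^{(n)})$ of \eqref{eq:meta.cost.3} satisfy $J(v^{(n)},\zeta^{(n)})=c_n\le c_*$, so they are bounded and I may run the same extraction as above, obtaining weak limits $(v,\zeta)$, a limiting flow $\varphi$, and pointwise convergence $m^{(n)}(1,x)\to m(1,x)$ with $m$ as before. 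Because the sets are nested, any point of $\bigcup_n x^{(0,n)}$ eventually belongs to $x^{(0,n)}$, so the discrete constraint gives $m^{(n)}(1,x_j^{(0)})=q^{(1)}(\varphi^{(n)}(1,x_j^{(0)}))$ for all large $n$; passing to the limit yields $m(1,x_j^{(0)})=q^{(1)}(\varphi(1,x_j^{(0)}))$ on the dense set $\bigcup_n x^{(0,n)}$. Since both $m(1,\cdot)$ and $q^{(1)}\circ\varphi(1,\cdot)$ are continuous, they coincide everywhere, so $(v,\zeta,\varphi,m)$ is feasible for \eqref{eq:meta.cost.2}. Finally, weak lower semicontinuity gives $J(v,\zeta)\le\liminf c_n\le c_*$, while feasibility forces $J(v,\zeta)\ge c_*$; hence $J(v,\zeta)=c_*$ and the limit solves \eqref{eq:meta.cost.2}.

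The main obstacle is precisely the passage to the limit in the composition term $\zeta^{(n)}\circ\varphi^{(n)}$, where a merely weakly convergent control is composed with a strongly convergent but nonlinear flow that additionally moves the evaluation point. The device above --- writing the evaluation through the reproducing kernel and splitting into a ``moving point'' and a ``weak control'' contribution --- converts this into a weak-times-strong product that is stable in the limit, at the price of the uniform operator bounds on $\tilde\bT_{\varphi^{(n)}(s)}$ and the continuity of $y\mapsto K_H(\cdot,y)$ established in the preliminary subsection. Care is also needed because the flow convergence is only local (on compact sets), so one relies on the decay at infinity built into $\cB^p$ and $H$, and on the fact that the endpoint identity is verified pointwise before being upgraded to an identity in $H$; these are exactly the places where the assumptions $r>d/2$ and $p\ge\max(1,r)$ enter.
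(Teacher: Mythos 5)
Your proof is correct and follows essentially the same route as the paper's: the direct method with weak compactness and lower semicontinuity, the identical kernel-based splitting of $\zeta^{(n)}\circ\varphi^{(n)}$ into a moving-point term (Cauchy--Schwarz plus continuity of $y\mapsto K_H(\cdot,y)$, using $r>d/2$) and a weak-control term, and the same relaxation inequality plus nested/density argument for the convergence of the discrete solutions. The only addition is your explicit verification that the feasible sets are non-empty (via $v\equiv 0$, $\zeta\equiv q^{(1)}-q^{(0)}$), a step the paper leaves implicit.
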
 
\begin{proof}
Let $(v^{(n)}, \zeta^{(n)}, \varphi^{(n)}, m^{(n)})$ be a minimizing sequence for Problem \eqref{eq:meta.cost.2}. Then (using a subsequence if needed), the bounded sequences $v^{(n)}$ and $\zeta^{(n)}$ weakly converge to limits $v$ and $\zeta$ in $L^2([0,1], V)$ and $L^2([0,1], H)$ respectively, with
\[
\|v\|_{L^2([0,1], V)} \leq \liminf \|v^{(n)}\|_{L^2([0,1], V)}
\text{ and } \|\zeta\|_{L^2([0,1], H)} \leq \liminf \|\zeta^{(n)}\|_{L^2([0,1], H)}.
\]
This weak convergence for $v^{(n)}$ implies that $\varphi^{(n)}$ converges to $\varphi$ uniformly on compact sets. For $x\in \mR^d$, write
\begin{multline*}
m^{(n)}(t,x) - m(t,x) = \int_0^t (\zeta^{(n)}(s, \varphi^{(n)}(s,x)) - \zeta^{(n)}(s, \varphi(s,x))) ds \\
+ \int_0^t (\zeta^{(n)}(s, \varphi(s,x)) - \zeta(s, \varphi(s,x))) ds .
\end{multline*} 
Since the linear form
\[
\zeta' \mapsto  \int_0^t \zeta'(s, \varphi(s,x)) ds
\]
is continuous in $L^2([0,1], H)$, the last term in the right-hand side converges to 0. Recall that $K_H$ denote the reproducing kernel on $H$, defined by $K_H(\cdot, x) = \bK_H\delta_x$. Rewrite the first term as
\begin{align*}
&\int_0^t (\zeta^{(n)}(s, \varphi^{(n)}(s,x)) - \zeta^{(n)}(s, \varphi(s,x))) ds\\
& = \int_0^t \scp{K_H(\cdot, \varphi^{(n)}(s,x)) - K_H(\cdot, \varphi(s,x))}{\zeta^{(n)}(s, \cdot)}_H ds\\
& \leq \left(\int_0^1 \|K_H(\cdot, \varphi^{(n)}(s,x)) - K_H(\cdot, \varphi(s,x))\|_H^2ds \right)^{1/2} \|\zeta^{(n)}\|_{L^2([0,1], H)} \\
& = \left(\int_0^1 (K_H(\varphi^{(n)}(s,x), \varphi^{(n)}(s,x)) - 2K_H(\varphi^{(n)}(s,x),\varphi(s,x)) + K_H(\varphi(s,x), \varphi(s,x)))^2ds \right)^{1/2}\\
&  \qquad \times \|\zeta^{(n)}\|_{L^2([0,1], H)} .
\end{align*}
This last term goes to 0 because $r>d/2$ implies that $K_H$ is continuous. As a consequence, we find that $m(1) = q^{(1)}(\varphi(1,x))$ is still satisfied at the limit, implying that $(v, \zeta, \varphi, m)$ is a solution of \eqref{eq:meta.cost.2}.
The proof for \eqref{eq:meta.cost.3} is exactly the same, since the only difference is that the constraint is enforced on a finite set instead of everywhere.\\

Now, let  $(v^{(n)}, \zeta^{(n)}, \varphi^{(n)}, m^{(n)})$ be a sequence of solutions of Problem \eqref{eq:meta.cost.3} with $x^{(0)} = x^{(0,n)}$. Since \eqref{eq:meta.cost.3} is a relaxation of \eqref{eq:meta.cost.2}, the optimal cost of the former is less than the optimal cost of the latter, implying that $v^{(n)}$ and $\zeta^{(n)}$ (or a subsequence) weakly converge to $v$ and $\zeta$ with pointwise convergence of $\varphi^{(n)}$ and $m^{(n)}$ to $\varphi$ and $m$ as above. Since the sets $x^{(0,n)}$ are nested, the constraint $m(1) = q^{(1)}(\varphi(1,x))$ is satisfied for all $x$ in their union, and therefore everywhere in $\mR^d$ since the union is dense. Finally, since the cost of the limit is no larger than the $\liminf$ of the costs of the sequence, which is itself no larger than the optimal cost of \eqref{eq:meta.cost.2}, we find that $(v,\zeta,\varphi,m)$ is an optimal solution of \eqref{eq:meta.cost.2}.
\end{proof}

The existence of solutions for the continuous problem \eqref{eq:meta.cost.2} is in fact true as soon as $r\geq 0$. Indeed, one can write
\[
 \int_0^1 \|\zeta(t)\|^2_H dt =  \int_0^1 \lform{\bL_{\varphi^v(t)^{-1}} \dot m}{\dot m} dt
\]
since $\dot m(t) = \zeta\circ \varphi^v(t)$, from which it results that the optimal $m$ at fixed $v$ is such that $\bL_{\varphi^v(t)^{-1}} \dot m $ remains constant over time. Letting $\sigma^{2}\rho_m\in H^*$ denote this constant value (the normalization by $\sigma^2$ ensures that $\rho_m$ coincides with the one introduced in \eqref{eq:pmp.2}), we get 
\[
m(t) - q^{(0)} = \left(\sigma^2\int_0^t \bL_{\varphi^v(t)^{-1}}^{-1} dt\right) \rho_m
\]
and using $m(1) = \tilde \bT_{\varphi^v(1)} q^{(1)}$, we get
\[
\sigma^2\rho_m = \bR_{\varphi^v}^{-1} (\tilde \bT_{\varphi^v(1)} q^{(1)} - q^{(0)})
\]
so that 
\[
 \int_0^1 \|\zeta(t)\|^2_H dt = \lform{\bR_{\varphi^v}^{-1}(\tilde \bT_{\varphi^v(1)} q^{(1)} - q^{(0)})}{\tilde \bT_{\varphi^v(1)} q^{(1)} - q^{(0)}}.
\]
The optimal $v$ must therefore minimize
\begin{equation}
\label{eq:v.reduc}
\frac12 \int_0^1 \|v(t)\|_V^2 dt + \frac1{2\sigma^2}\lform{\bR_{\varphi^v}^{-1}(\tilde \bT_{\varphi^v(1)} q^{(1)} - q^{(0)})}{\tilde \bT_{\varphi^v(1)} q^{(1)} - q^{(0)}}
\end{equation}
and an argument using minimizing sequences combined with the continuity of $\tilde \bT_\psi$ and $\bR_\varphi$ leads to the existence of a minimizer (this generalizes the result proved in \cite{ty_2005} in the $L^2$ case). Of course, the discretization in \eqref{eq:meta.cost.3} does not make sense for $r\leq d/2$, unless one replaces point evaluation by some other continuous linear forms on $H$, like evaluation against test functions. This would, however, have less practical interest, since test functions do not evolve in a computationally simple way under the action of diffeomorphisms.

\subsection{Optimality Conditions}

We  pass to the necessary conditions for optimal solutions of \eqref{eq:meta.cost.2}, and now assume that $r>d/2+1$ so that $H$ is embedded in $C^1_0(\mR^d)$. Note that, since \eqref{eq:meta.cost.3} can be reduced to \eqref{eq:meta.cost.4}, which is finite dimensional, its optimality conditions follow from the standard Pontryagin maximum principle.  For the infinite-dimensional case, we have:
\begin{theorem}
\label{th:opt.sol}
Assume that both $q^{(1)}$ and $q^{(0)}$ belong to $H^{(r+1)}(\mR^d)$. Then, if $(v, \zeta, \varphi, m)$ is an  optimal solution of 
\eqref{eq:meta.cost.2}, there exist $\rho_\varphi\in (\cB^p)^*$ and $\rho_m\in H^*$ such that \eqref{eq:pmp.2} is satisfied, with 
\[
\lform{\rho_\varphi(t)}{w} + \lform{\rho_m}{\nabla q^{(1)} \circ \varphi(1)\cdot w} = 0
\]
for all $w \in \cB^p$.
\end{theorem}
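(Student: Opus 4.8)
The plan is to eliminate the state variables $\zeta$ and $m$ exactly as in the existence argument, reducing \eqref{eq:meta.cost.2} to the unconstrained minimization of the functional \eqref{eq:v.reduc} over $v\in L^2([0,1],V)$, and then to read off \eqref{eq:pmp.2} from the stationarity condition $dU(v).h=0$. The elimination immediately produces two of the six equations: optimizing $\zeta$ at fixed $v$ forces $\bL_{\varphi^v(t)^{-1}}\dot m$ to be constant in $t$, so defining $\rho_m$ by $\sigma^2\rho_m = \bL_{\varphi^v(t)^{-1}}\dot m = \bR_{\varphi^v}^{-1}(\tilde\bT_{\varphi^v(1)}q^{(1)} - q^{(0)})$ yields both $\dot\rho_m = 0$ and $\zeta = \dot m\circ\varphi^{-1} = \sigma^2\bK_H\tilde\bT_{\varphi}^*\rho_m$. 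The hypotheses $q^{(0)},q^{(1)}\in H^{r+1}(\mR^d)$ enter precisely here: they guarantee $w := \tilde\bT_{\varphi^v(1)}q^{(1)} - q^{(0)}\in H^{r+1}$, which by the mapping properties of $\bR_\varphi^{-1}$ established in the preliminary subsection places $\rho_m\in H^{1-r}$, exactly the regularity needed below to differentiate $\varphi\mapsto\bR_\varphi\rho_m$.

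Next I would establish that $U$ is differentiable and compute $dU(v).h$. Writing $U(v) = \tfrac12\|v\|_{L^2(V)}^2 + \tfrac1{2\sigma^2}\lform{\bR_{\varphi^v}^{-1}w}{w}$, the chain rule combines (i) differentiability of $v\mapsto\varphi^v$ with the explicit derivative $\partial_v\varphi^v(0,t,\cdot).h = \int_0^t D\varphi^v(u,t,\varphi^v(0,u,\cdot))h(u,\varphi^v(0,u,\cdot))du$, (ii) differentiability of $\psi\mapsto\tilde\bT_\psi q^{(1)}$ with differential $u\mapsto\nabla q^{(1)}\circ\psi\cdot u$ (valid since $q^{(1)}\in H^{r+1}$), and (iii) differentiability of $\varphi\mapsto\bR_\varphi\rho_m$ and of $\bR_\varphi^{-1}w$ (valid since $w\in H^{r+1}$ and $\rho_m\in H^{1-r}$). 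Using $d(\bR^{-1}) = -\bR^{-1}(d\bR)\bR^{-1}$ and the symmetry of $\bR_\varphi$, this gives
\[
dU(v).h = \int_0^1\scp{v(t)}{h(t)}_V\,dt + \lform{\rho_m}{\nabla q^{(1)}\circ\varphi(1)\cdot\big(\partial_v\varphi(0,1,\cdot).h\big)} - \frac{\sigma^2}{2}\lform{\rho_m}{(d\bR_{\varphi^v}.h)\rho_m}.
\]
Since the reduced problem is unconstrained, optimality forces $dU(v).h = 0$ for every $h\in L^2([0,1],V)$.

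The crux is to convert this integrated identity into the pointwise co-state system. I would substitute the explicit formula for $\partial_v\varphi.h$ into the last two terms, expand $d\bR_{\varphi^v}.h = \int_0^1 d(\tilde\bT_{\varphi(t)}\bK_H\tilde\bT_{\varphi(t)}^*).h\,dt$, and interchange the order of integration so that every contribution is written as $\int_0^1\lform{\cdots}{h(u,\varphi(0,u,\cdot))}\,du$. Collecting the coefficient of $h(u,\cdot)$ and representing the resulting linear form through the reproducing kernel, the stationarity condition becomes $\scp{v(u) - \bK_V\bT_{\varphi(u)}^*\rho_\varphi(u)}{h(u)}_V = 0$ for a.e.\ $u$, where $\rho_\varphi(u)\in(\cB^p)^*$ is the pullback, through the flow between time $u$ and time $1$, of the terminal data determined by $\rho_m$ and $\nabla q^{(1)}\circ\varphi(1)$ together with the intermediate contributions from $d\bR$. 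This defines $\rho_\varphi$ and simultaneously gives $v = \bK_V\bT_\varphi^*\rho_\varphi$, the fifth equation of \eqref{eq:pmp.2}. Differentiating the pullback representation of $\rho_\varphi(u)$ in $u$ (using $\partial_t D\varphi(u,t,\cdot) = Dv(t)\circ\varphi(u,t,\cdot)\,D\varphi(u,t,\cdot)$) yields the backward transport equation $\dot\rho_\varphi = -\partial_\varphi\lform{\rho_\varphi}{v\circ\varphi} - \partial_\varphi\lform{\rho_m}{\zeta\circ\varphi}$, and evaluating the pullback at $u=1$ gives the terminal condition $\lform{\rho_\varphi(1)}{w} + \lform{\rho_m}{\nabla q^{(1)}\circ\varphi(1)\cdot w} = 0$ asserted in the statement; its propagation to all $t$ is the invariance of $\mu(t)$ already verified after \eqref{eq:bndry}.

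The main obstacle I anticipate is this last step: making the interchange of integration and the identification of $\rho_\varphi(t)$ as a genuine, continuous-in-$t$ element of the dual $(\cB^p)^*$ solving the adjoint equation fully rigorous. Establishing the requisite boundedness of the pulled-back functionals, so that $\bT_{\varphi(t)}^*\rho_\varphi(t)$ lands in $V^*$ and $v(t)\in V$, and the absolute continuity in $t$ needed to write the transport equation, are the delicate points; the differentiability inputs (i)--(iii) are by contrast already packaged in the preliminary subsection, and the equations for $\rho_m$ and the boundary condition follow once the reduction and the pullback identification are in place.
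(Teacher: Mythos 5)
Your proposal is correct and follows the same overall decomposition as the paper: eliminate $\zeta$ and $m$ by optimizing at fixed $v$, which identifies $\rho_m$ via $\bR_{\varphi}^{-1}(\tilde \bT_{\varphi(1)} q^{(1)} - q^{(0)})$ and yields $\dot\rho_m = 0$ and $\zeta = \sigma^2 \bK_H \tilde\bT_{\varphi}^*\rho_m$, then apply first-order variational calculus to the reduced functional in $v$, using exactly the differentiability facts (i)--(iii) from the preliminary subsection (and your regularity bookkeeping, $q^{(0)},q^{(1)}\in H^{r+1}$ forcing $\rho_m\in H^{1-r}$, matches the paper's). The genuine difference is in how the co-state is produced, and it bears directly on the obstacle you flag. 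You propose to substitute the explicit formula for $\partial_v\varphi.h$ into the stationarity identity, interchange integrals, and read off $\rho_\varphi(u)$ as an explicit pullback (Duhamel) representation of the terminal data plus intermediate contributions; you then must verify a posteriori that this object is a continuous-in-$t$ element of the dual solving the transport equation, which you rightly identify as delicate. The paper reverses the order of operations: it \emph{defines} $\rho_\varphi$ as the solution of the backward linear ODE $\dot\rho_\varphi = -\partial_\psi\lform{\rho_\varphi}{v(t)\circ\psi}\big|_{\psi=\varphi(t)} - \mu(t)$, $\rho_\varphi(1) = -\nu$, where $\mu(t) = \partial_\psi\lform{\rho_m}{\tilde\bT_\psi \zeta(t)}\big|_{\psi=\varphi(t)}$ and $\nu: w \mapsto \lform{\rho_m}{\nabla q^{(1)}\circ\varphi(1)\cdot w}$, proving well-posedness of this ODE in $(\cB^{p-1})^*$ through the bound on the map $Q_{w,\psi}:\rho\mapsto\partial_\psi\lform{\rho}{w\circ\psi}$ together with $\int_0^1\|Q_{v(t),\varphi(t)}\|^2\,dt<\infty$ and $\mu,\nu\in C_0^r(\mR^d,\mR^d)^*\subset C_0^{p-1}(\mR^d,\mR^d)^*$. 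With $\rho_\varphi$ so defined, the single duality identity $\partial_t\lform{\rho_\varphi}{\delta\varphi} = -\lform{\mu}{\delta\varphi} + \lform{\rho_\varphi}{\delta v\circ\varphi}$, integrated over $[0,1]$, converts the stationarity of the reduced energy directly into $\int_0^1\lform{\bA_V v(t) - \bT_{\varphi(t)}^*\rho_\varphi(t)}{\delta v(t)}\,dt = 0$, i.e.\ $v = \bK_V\bT_\varphi^*\rho_\varphi$, with no Fubini interchange, no collection of coefficients, and no separate verification that a pullback formula solves the adjoint equation. If you carried out your version you would in effect re-derive the variation-of-constants formula for this same ODE; adopting the paper's order (adjoint ODE first, duality pairing second) is the standard adjoint-state device and dissolves precisely the difficulty you anticipate.
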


\begin{proof}

Let $(v, \zeta, \varphi, m)$ be an optimal solution and let $\rho_m = \bR_{\varphi}^{-1} (\tilde \bT_{\varphi(1)} q^{(1)} - q^{(0)})$. 
As remarked at the end of the previous section, the optimal $\zeta$ with fixed $v$ is given by
\begin{equation}
\label{eq:opt.zeta}
\zeta(t) =  \tilde \bT_{\varphi(t)^{-1}}  \bL_{\varphi^v(t)^{-1}}^{-1}\rho_m = \bK_H \tilde \bT_{\varphi(t)}^* \rho_m,
\end{equation}
which is consistent with \eqref{eq:pmp.2}.\\

We now consider the optimal $v$ when $\zeta$ is given by \eqref{eq:opt.zeta}, which minimizes
\begin{multline}
\label{eq:v.reduc.2}
\frac12 \|v\|_{L^2([0,1], V)}^2 + \frac1{2\sigma^2} \lform{\bR_{\varphi}^{-1}(\tilde \bT_{\varphi(1)} q^{(1)} - q^{(0)})}{\tilde \bT_{\varphi(1)} q^{(1)} - q^{(0)}}\\
\text{ subject to } \dot \varphi(t) = v(t) \circ \varphi(t).
\end{multline}

If $z \in H^{r+1}(\mR^d)$ the mapping $\varphi \mapsto \lform{\bR_\varphi^{-1} z}{z}$ is differentiable with differential
\[
\partial_\varphi \Big(\lform{\bR_\varphi^{-1} z}{z}\Big).w = -2 \int_0^1 \lform{\partial_\phi(\tilde \bT_{\varphi(t)}^* \eta).w}{\bK_H \tilde\bT_{\varphi(t)}^*\eta} dt 
\]
with $\eta = \bR_{\varphi}^{-1} z$.

Let $E(v, \varphi)$ denote the minimized term in \eqref{eq:v.reduc.2}. We assume that both $q^{(1)}$ and $q^{(0)}$ belong to $H^{r+1}(\mR^d)$, which implies that $q^{(1)}\circ \varphi(1) - q^{(0)}\in H^{r+1}(\mR^d)$ too.  From the previous discussion and the expression of $\rho_m$, $E$ is differentiable in $\varphi$, with
\begin{eqnarray*}
\partial_\varphi E .  w &=& - \int_0^1 \lform{\partial_\varphi(\tilde \bT_{\varphi(t)}^* \rho_m).w(t)}{\zeta(t)} dt  + \lform{\rho_m}{\nabla q^{(1)} \circ \varphi(1)\cdot w(1)}\\
&=& - \int_0^1 \partial_\varphi\lform{\rho_m}{\tilde \bT_{\varphi(t)}\zeta(t)}.w(t) dt  + \lform{\rho_m}{\nabla q^{(1)} \circ \varphi(1)\cdot w(1)}
\end{eqnarray*}

Define $\mu(t) = \partial_\psi \lform{\rho_m}{\tilde T_{\psi}\zeta(t)}_{|_{\psi = \varphi(t)}}$. The derivative exists, since $\rho_m\in H^{-r}(\mR^d)$ and $\zeta\in H^{r+1}(\mR^d)$, and provides a  a bounded linear form on $C^r(\mR^d, \mR^d)$. Define also the form $\nu: w\mapsto \lform{\rho_m}{(\nabla q^{(1)} \circ \varphi(1))\cdot w}$, which is also bounded on $C^r(\mR^d, \mR^d)$.  Define $\rho_\varphi(t)$ as the solution of the ODE
\[
\dot \rho_\varphi = -\partial_\psi \lform{\rho_\varphi}{T_{\varphi(t)}v(t)}_{|_{\psi = \varphi(t)}} -\mu(t)
\]
with $\rho_\varphi(1) = -\nu$ (this ODE is the third equation in \eqref{eq:pmp.2}). To see that this solution is well defined, first note that, for any given $\rho \in (\cB^{p-1})^*$ and $w\in V$, the mapping $\psi \mapsto \lform{\rho}{w\circ \psi}$, defined on $\cB^p$ is differentiable in $\psi$, with differential
\[
\partial_\psi\lform{\rho}{w\circ \psi}\cdot\delta\psi = \lform{\rho}{Dw\circ\psi\cdot\delta\psi}.
\] 
As a consequence, we have 
\[
\partial_\psi\lform{\rho}{w\circ \psi}\in C^{p-1}(\mR^d, \mR^d)^*,
\]
with norm bounded by $\text{cst}.\|\rho\|_{p-1,\infty,*} \|w\|_{p,\infty} \, \|\psi\|_{p-1, \infty}$ .
The map $Q_{w,\psi} : \rho \mapsto \partial_\psi\lform{\rho}{w\circ \psi}$ therefore is a bounded linear map on $C^{p-1}(\mR^d, \mR^d)^*$, satisfying
\[
\int_0^1 \|Q_{v(t), \varphi(t)}\|^2 dt < \infty
\]
as soon as $\int_0^1 \|v(t)\|_{p,\infty}^2 dt <\infty$, which is true for a minimizer of \eqref{eq:v.reduc.2}. Since both $\mu(t)$ and $\nu$ belong to $C_0^r(\mR^d, \mR^d)^*\subset C_0^{p-1}(\mR^d, \mR^d)^*$ (since $p\geq r+1$), the solution $\rho_\varphi$ of $\dot\rho_\varphi = Q_{v,\varphi} \rho_{\varphi} + \mu$ initialized at $\rho_\varphi (1) = -\nu$ is uniquely defined over $[0,1]$. \\

If $\delta v \in L^2([0,1], V)$,  the directional derivative $\delta \varphi := \partial_v \varphi^v . \delta v$ satisfies (since $\varphi = \varphi^v$)
\[
\partial_t \delta \varphi(t) = \partial_{\varphi}( T_{\varphi(t)}v(t)) \cdot \delta \varphi(t) +\delta v(t) \circ \varphi(t)
\]
with $\delta\varphi(0) = 0$. From the definition of $\rho_\varphi$, we have
\[
\partial_t \lform{\rho_\varphi}{\delta \varphi} = - \lform{\mu}{\delta\varphi} + \lform{\rho_\varphi}{\delta v\circ \varphi}
\]
so that
\[
-\int_0^1 \lform{\mu(t)}{\delta\varphi(t)}dt + \lform{\nu(1)}{\delta\varphi(1)} = - \int_0^1 \lform{\rho_\varphi(t)}{\delta v(t) \circ \varphi(t)}dt.
\]

If $v$ is an optimal solution of \eqref{eq:meta.cost.2}, we must have
\[
\int_0^1 \lform{\bA_V v(t)}{\delta v(t)} dt -  \int_0^1 \lform{\rho_\varphi(t)}{\delta v(t) \circ \varphi(t)}dt = 0
\]
for all $\delta v$, which implies that
\[
v(t) = \bK_V \bT_\varphi^* \rho_\varphi(t).
\]
This is the fifth equation in \eqref{eq:pmp.2}, and completes the proof of Theorem \ref{th:opt.sol}. 
\end{proof}

\subsection{Existence of Solutions of the Initial-Value Problem}

We now discuss the existence and uniqueness of solutions of  \eqref{eq:pmp.2} with initial conditions $\varphi(0) = \mathrm{id}$, $m(0) = m_0$, $\rho_{\varphi}(0) = \rho_{\varphi,0}$ and $\rho_{m}(0) = \rho_{m,0}$. We will assume that $\rho_{\varphi,0} \in (\cB^{p-2})^*$ and $\rho_{m,0} \in H^{1-r}(\mR^d)$ with $p\geq r+1$.

Since $\rho_m$ is constant and $m$ is obtained via quadrature given $\zeta$ and $\varphi$, we will focus on the subsystem
\begin{equation}
\label{eq:pmp.2.red}
\begin{dcases}
\dot \varphi(t) = v(t) \circ \varphi(t)\\
\dot \rho_\varphi(t) = - \partial_{\varphi(t)}\lform{\rho_\varphi(t)}{v(t)\circ \varphi(t)} - \partial_{\varphi(t)} \lform{\rho_m}{\zeta(t)\circ \varphi(t)}\\
v(t) = \bK_V \bT_{\varphi(t)}^* \rho_\varphi(t)\\
\zeta(t) = \sigma^2 \bK_H \tilde \bT_{\varphi(t)}^* \rho_m
\end{dcases}
\end{equation}

If $\rho\in (\cB^{p-2})^*$ and $w\in V\subset \cB^p$, the mapping $\psi \mapsto \lform{\rho}{w\circ \psi} = \lform{\rho}{\bT_\psi w}$ is differentiable in $\psi\in \Diff^p$ with $\partial_\psi\lform{\rho}{w\circ \psi}.h = \lform{\rho}{Dw\circ \psi.h}$.  One deduces from this that $\rho\mapsto \partial_\psi\lform{\rho}{w\circ \psi}$ is a bounded endomorphism of $(\cB^{p-2})^*$ with operator norm bounded by $c(\|\psi-\id\|_{p-2, \infty}) \|w\|_{p-1,\infty}$. If $\rho_m\in H^{1-r}$ and $\zeta\in H^r(\mR^d)$, we have $\partial_\psi \lform{\rho_m}{\zeta\circ \psi}.h = \lform{\rho_m}{\nabla \zeta\circ \psi \cdot h}$.

From the expressions of $v$ and $\zeta$, one easily checks that 
\[
\partial_\psi\lform{\rho_\varphi}{v\circ \psi} = \frac12 \partial_\varphi\lform{\rho_\varphi}{\bT_\psi\bK_V\bT_\psi^*\rho_\varphi} \text{ and }
\partial_\psi \lform{\rho_m}{\zeta\circ \psi} = \frac12 \partial_\psi\lform{\rho_m}{\tilde \bT_\psi \bK_H \tilde \bT^*_\psi \rho_m}.
\]
One also has
$\|v\|_V^2 = \lform{\rho_\varphi}{v\circ\varphi}$ and $\|\zeta\|_H^2 = \sigma^2 \lform{\rho_m}{\zeta\circ \varphi}$, from which one deduces that, along any solution of \eqref{eq:pmp.2.red}, one has
\[
\partial_t\Big(\|v\|_V^2 + \|\zeta\|_H^2/\sigma^2\Big) = 0
\]
since this time derivative is equal to 
\[
\lform{\partial_t \rho_\varphi}{v\circ\varphi} + \partial_\varphi\lform{\rho_\varphi}{v\circ \varphi}.\partial_t\varphi + \partial_\varphi \lform{\rho_m}{\zeta\circ \varphi}.\partial_t \varphi 
\]
which vanishes since $\partial_t\varphi = v\circ \varphi$. This implies, in particular, that 
$v\in L^2([0,t], V)$ and $\zeta\in L^2([0,t], H)$ along any solution of \eqref{eq:pmp.2.red} on the interval $[0,t]$.

Conversely, as soon as $v\in L^2([0,t], V)$ and $\zeta\in L^2([0,t], H)$, the equation
\begin{equation}
\label{eq:ode.rho}
\dot \rho_\varphi = - \partial_\varphi\lform{\rho_\varphi}{v\circ \varphi} - \partial_\varphi \lform{\rho_m}{\zeta\circ \varphi}
\end{equation}
is  a well-defined linear equation on $(\cB^{p-2})^*$, with a unique solution, since we assume $\rho_{\varphi,0} \in (\cB^{p-2})^*$. Its solution can be made explicit by noting that
\begin{eqnarray*}
\partial_t \lform{\rho_{\varphi}}{D\varphi\, w} &=&  - \lform{\rho_\varphi}{Dv\circ \varphi\, D\varphi\, w} - \lform{\rho_m}{\nabla \zeta \circ \varphi \cdot D \varphi\, w} + \lform{\rho_\varphi}{Dv\circ \varphi\, D\varphi\, w}\\ &=& - \lform{\rho_m}{\nabla (\zeta \circ \varphi) \cdot w},
\end{eqnarray*}
from which we conclude that
\[
\lform{\rho_{\varphi}(t)}{D\varphi(t) w} = \lform{\rho_{\varphi,0}}{w} - \int_0^t \lform{\rho_m}{\nabla (\zeta(s) \circ \varphi(s)) \cdot w} ds.
\]
Given this, we can summarize system \eqref{eq:pmp.2} with a single consistency equation for $v$, namely, for all $w\in V$:
\begin{equation}
\label{eq:v.consist}
\lform{\bA_V v(t)}{w} = \lform{\rho_{\varphi,0}}{ \Ad_{\varphi^v(t)^{-1}} w} -  \sigma^2 \int_0^t \lform{\rho_m}{\nabla (\bL_{\varphi^v(s)^{-1}}^{-1}\rho_m) \cdot \Ad_{\varphi^v(t)^{-1}}w} dt,
\end{equation}
in which we have introduced (for $\psi\in \Diff^{p-1}$) the ``adjoint'' operator $\Ad_\psi: w \mapsto (D\psi\, w)\circ \psi^{-1}$, as an operator from $V$ to $\cB^{p-2}$ and used the fact that $\zeta\circ \varphi =  \sigma^2 \tilde \bT_\varphi \bK_H \tilde \bT_\varphi^* \rho_m = \sigma^2 \bL_{\varphi^{-1}}^{-1} \rho_m$. Equation \eqref{eq:v.consist} with $\sigma^2=0$ is of course the well-known momentum conservation equation over diffeomorphisms with a right-invariant metric \cite{arnold1966geometrie,arnold1976methodes,marsden1992lectures,holm1998euler}. \\

Let $\beta^v$ denote the time-dependent linear form applied to $w$ in the right-hand side of \eqref{eq:v.consist}, which therefore can be summarized as $v(t) = \bK_V \beta^v(t)$. Fix a constant $M$. We first check that, for small enough $t$,
$\beta^v(t) \in V^*$ as soon as  $v \in L^2([0,t], V)$ and $\|v\|_{L^2([0,t], V)} \leq M$ implies $\|\beta^v\|_{L^2([0,t], V^*)} = \|\bK_V\beta^v\|_{L^2([0,t],V)} \leq M$ also.

We have, for $\psi\in \Diff^p$, $\Ad_{\psi^{-1}} w = ((D\psi)^{-1} - \mathrm{Id}_{\mR^d}) w\circ \psi + w\circ \psi$, from which one gets
\[
\lform{\rho_{\varphi,0}}{ \Ad_{\varphi^v(t)^{-1}} w} \leq c(\|\varphi(t) -\id\|_{p-1, \infty}) \|\rho_{\varphi,0}\|_{p-2, \infty, *} \|w\|_{p-2, \infty}
\]
Since
\[
\|\varphi(t) -\id\|_{p-1, \infty} \leq c(\|v\|_{L^2([0,t], V)}) \|v\|_{L^2([0,t], V)}
\]
we find
\begin{equation}
\label{eq:rho.bound}
\lform{\rho_{\varphi,0}}{ \Ad_{\varphi^v(t)^{-1}} w} \leq c(\|v\|_{L^2([0,t], V)})  \|\rho_{\varphi,0}\|_{p-2, \infty, *}  \|w\|_{p-2, \infty}.
\end{equation}

Since $\bL_{\psi^{-1}}^{-1}$ maps $H^{1-r}(\mR^d)$ onto $H^{r+1}(\mR^d)$ for $\psi\in \Diff^p$, we have 
\[
\|\bL_{\psi^{-1}}^{-1}\rho_m\|_{r+1,2}\leq \text{cst} \|\tilde \bT_\psi\|_{L^2(H^{r+1})}^2 \|\rho_m\|_{1-r, 2} \leq c(\|\psi\|_{r+1,\infty}) \|\rho_m\|_{1-r, 2}.
\]
 Combined with the previous estimate, this yields, for $\psi,\tilde\psi\in \Diff^p$,
\[
\lform{\rho_m}{\nabla (\bL_{\psi^{-1}}^{-1}\rho_m) \cdot \Ad_{\tilde\psi^{-1}} w} \leq c(\|\psi\|_{r+1,\infty},\|\tilde\psi\|_{r-1,\infty}) \|\rho_m\|^2_{1-r, 2}\| \|w\|_{r-2, \infty}.
\]
Since $r\leq p-1$, we can conclude that 
\begin{equation}
\label{eq:betav}
\|\beta^v(t)\|_{p-2, \infty, *} \leq c(\|v\|_{L^2([0,t], V)})(\|\rho_{\varphi,0}\|_{p-2, \infty, *} + t \|\rho_m\|^2_{1-r, 2}),
\end{equation}
from which it follows that $\beta^v(t) \in (\cB^{p-2})^* \subset V^*$ with 
\[
\|\beta^v\|^2_{L^2([0,t], V^*)} \leq t c(\|v\|_{L^2([0,t], V)})(\|\rho_{\varphi,0}\|_{p-2, \infty, *}  + t \|\rho_m\|^2_{1-r, 2})^2.
\]
If we assume that $\|v\|_{L^2([0,t], V)}\leq M$, we  get $\|\beta^v\|_{L^2([0,t], V^*}\leq M$ for $t\leq t_0$, where $t_0$ is chosen such that $t_{0} c(M)(\|\rho_{\varphi,0}\|_{p-2, \infty, *}  + t_{0} \|\rho_m\|^2_{1-r, 2})^2\leq M$. It is important to notice that, beside universal constants and $M$, $t_0$ only depends on $\|\rho_{\varphi,0}\|^2_{p-2, \infty, *}$ and $\|\rho_m\|_{1-r,2}$. In the following, we take $M$ large enough so that any solution of \eqref{eq:pmp.2} must satisfy $\|v\|_{L^2([0,t_0], V)}\leq M$ for any $t_0\leq 1$. This is possible since we have remarked that $\|v(t)\|^2_V + \sigma^{-2}\|\zeta(t)\|^2_H$ remains constant along any solution of \eqref{eq:pmp.2} so that, if $t_0\leq 1$, one must have
\[
\|v\|^2_{L^2([0,t_0], V)} \leq \|v(0)\|^2_V + \sigma^{-2}\|\zeta(0)\|^2_H = \|\rho_{\varphi,0}\|^2_{V^*} + \sigma^2 \|\rho_m\|^2_{H^*}.
\]\\


We now estimate the Lipschitz constant of $v\mapsto \beta^v$ on the ball of radius $M$ of $L^2([0,t], V)$ for $t\leq t_0$. In the computations that follow, we will use repetitively the fact that $\|\varphi^v(t) - \id\|_{l, \infty} \leq c(M)$ for any $l\leq p$, as soon as $\|v\|_{L^2([0,t], V)}\leq M$ (recall that $c$ is a notation for a generic continuous function).  Recall also that $p\geq r+1$. 
Writing, assuming $\max(\|v\|_{L^2([0, t_0], V)},\|\tilde v\|_{L^2([0, t_0], V)}) \leq M$,
\[
\Ad_{\varphi^v(t)^{-1}} w - \Ad_{\varphi^{\tilde v}(t)^{-1}} w = (D\varphi^v(t)^{-1} - D\varphi^{\tilde v}(t)^{-1}) w\circ  \varphi^v(t) + D\varphi^{\tilde v}(t)^{-1} (w\circ \varphi^v(t) - w\circ \varphi^{\tilde v}(t))
\]
and using Lemma  \ref{lem:diff.comp} and Leibnitz formula, we get
\begin{equation}
\label{eq:ad.lip}
\|\Ad_{\varphi^v(t)^{-1}} w - \Ad_{\varphi^{\tilde v}(t)^{-1}} w\|_{p-2, \infty} \leq c(M) \|\varphi^v(t)-\varphi^{\tilde v}(t)\|_{p-1, \infty} \|w\|_{p-1, \infty}
\end{equation}
as soon as $\psi, \tilde\psi\in \Diff^{p-1}$ and $w\in \cB^{p-1}$. This immediately implies 
\[
\|(\Ad_{\varphi^v(t)^{-1}}^* - \Ad_{\varphi(t)^{-1}}^*)\rho_{\varphi,0}\|_{p-1, \infty,*} \leq c(M) \|\rho_{\varphi,0}\|_{p-2, \infty,*}\|\varphi^v(t)-\varphi^{\tilde v}(t)\|_{p-1, \infty}.
\]

Write
\begin{eqnarray}
\nonumber
&&\lform{\rho_m}{\nabla (\bL_{\varphi^v(s)^{-1}}^{-1}\rho_m) \cdot \Ad_{\varphi^v(t)^{-1}}w - \nabla (\bL_{\tilde \varphi^v(s)^{-1}}^{-1}\rho_m) \cdot \Ad_{\tilde \varphi^v(t)^{-1}}w} \\
\nonumber
&&\quad = \lform{\rho_m}{(\nabla (\bL_{\varphi^v(s)^{-1}}^{-1}\rho_m) - \nabla (\bL_{\tilde \varphi^v(s)^{-1}}^{-1}\rho_m))\cdot \Ad_{\varphi^v(t)^{-1}}w}\\
\nonumber
&& \qquad + \lform{\rho_m}{ \nabla (\bL_{\tilde \varphi^v(s)^{-1}}^{-1}\rho_m) \cdot (\Ad_{\varphi^v(t)^{-1}}w - \Ad_{\tilde \varphi^v(t)^{-1}}w)}\\
\label{eq:ivp.4} 
&&\quad \leq \text{cst}\,\|\rho_m\|_{1-r,2} \|\bL_{\varphi^v(s)^{-1}}^{-1}\rho_m - \bL_{\tilde \varphi^v(s)^{-1}}^{-1}\rho_m\|_{r,2} \|\Ad_{\varphi^v(t)^{-1}}w\|_{r-1, \infty}\\
\nonumber
&& \qquad + \text{cst}\,\|\rho_m\|_{1-r,2} \|\bL_{\tilde \varphi^v(s)^{-1}}^{-1}\rho_m\|_{r, 2} \|\Ad_{\varphi^v(t)^{-1}}w - \Ad_{\tilde \varphi^v(t)^{-1}}w\|_{r-1, \infty} 
\end{eqnarray}
We have (letting $\psi = \varphi^v(s)$ and $\tilde\psi = \varphi^{\tilde v}(s)$)
\begin{eqnarray}
\nonumber
\|\bL_{\psi^{-1}}^{-1} \rho_m - \bL_{\tilde \psi^{-1}}^{-1} \rho_m\|_{r,2} &=& \|\tilde\bT_\psi\bK_H \tilde\bT_\psi^*\rho_m - \tilde\bT_{\tilde\psi} \bK_H \tilde\bT_{\tilde\psi}^*\rho_m\|_{r,2}\\
\label{eq:ivp.2} 
&\leq& \|\tilde\bT_\psi\|_{r,2} \|\bK_H \tilde\bT_\psi^*\rho_m - \bK_H \tilde\bT_{\tilde\psi}^*\rho_m\|_{r,2} \\
\nonumber
&&
+ \|\tilde\bT_\psi\bK_H \tilde\bT_{\tilde\psi}^*\rho_m - \tilde\bT_{\tilde\psi} \bK_H \tilde\bT_{\tilde\psi}^*\rho_m\|_{r,2}
\end{eqnarray}

Let us consider the last two terms separately. We have $\|\tilde \bT_\psi\|_{r,2} = c(\|\psi-\mathrm{id}\|_{r,\infty}) \leq c(M)$. Also, 
\[
\|\bK_H \tilde\bT_\psi^*\rho_m - \bK_H \tilde\bT_{\tilde\psi}^*\rho_m\|_{r,2} \leq \text{cst} \|\tilde\bT_\psi^*\rho_m - \tilde\bT_{\tilde\psi}^*\rho_m\|_{-r,2} = \text{cst}\,\sup\Big(\lform{\rho_m}{\tilde\bT_\psi z- \tilde\bT_{\tilde\psi} z}: \|z\|_{r,2}\leq 1\Big)
\]
and we have 
\[
\lform{\rho_m}{\tilde\bT_\psi z- \tilde\bT_{\tilde\psi} z} \leq \|\rho_m\|_{1-r,2} \|\tilde\bT_\psi z- \tilde\bT_{\tilde\psi} z\|_{r-1,2} \leq c(M) \|\rho_m\|_{1-r,2} \|\psi - \tilde\psi \|_{r-1,\infty} \|z\|_{r,2}
\]
so that
\begin{equation}
\label{eq:ivp.1}
\|\tilde\bT_\psi\|_{r,2} \|\bK_H \tilde\bT_\psi^*\rho_m - \bK_H \tilde\bT_{\tilde\psi}^*\rho_m\|_{r,2} \leq c(M) \|\rho_m\|_{1-r,2} \|\psi - \tilde\psi \|_{r-1,\infty}.
\end{equation}
For the second term in \eqref{eq:ivp.2}, write
\begin{eqnarray*}
\|\tilde\bT_\psi\bK_H \tilde\bT_{\tilde\psi}^*\rho_m - \tilde\bT_{\tilde\psi} \bK_H \tilde\bT_{\tilde\psi}^*\rho_m\|_{r,2} &\leq& c(M) \|\psi - \tilde\psi \|_{r,\infty} \|\bK_H \tilde\bT_{\tilde\psi}^*\rho_m \|_{r+1,2} \\
&\leq&  c(M) \|\psi - \tilde\psi \|_{r,\infty} \|\tilde\bT_{\tilde\psi}^*\rho_m\|_{1-r,2}\\
&\leq&  c(M) \|\psi - \tilde\psi \|_{r,\infty} \|\rho_m\|_{1-r,2}
\end{eqnarray*}
From this and \eqref{eq:ivp.2}, \eqref{eq:ivp.1}, we get
\begin{equation}
\label{eq:ivp.3}
\|\bL_{\psi^{-1}}^{-1} \rho_m - \bL_{\tilde \psi^{-1}}^{-1} \rho_m\|_{r,2} \leq c(M) \|\psi - \tilde\psi \|_{r,\infty} \|\rho_m\|_{1-r,2}
\end{equation}
Since
\[
\|\Ad_{\varphi^v(t)^{-1}}w\|_{r-1, \infty} \leq c(\|\varphi^v(t) -\id\|_{r,\infty})\|w\|_{r-1, \infty}
\]
we find that the first term in \eqref{eq:ivp.4} is less than  $c(M) \|\psi - \tilde\psi \|_{r,\infty} \|\rho_m\|^2_{1-r,2}\|w\|_{r-1, \infty}$. 

For the second term in \eqref{eq:ivp.4}, we have $\|\bL_{\tilde \varphi^v(s)^{-1}}^{-1}\rho_m\|_{r, 2}\leq c(M) \|\rho_m\|_{1-r,2}$ while, similarly to \eqref{eq:ad.lip},
\[
\|\Ad_{\varphi^v(t)^{-1}}w - \Ad_{\tilde \varphi^v(t)^{-1}}w\|_{r-1, \infty} \leq c(M) \|\varphi^v(t) - \varphi^{\tilde v}(t)\|_{r, \infty} \|w\|_{r, \infty}.
\]
This finally gives the upper-bound
\begin{multline*}
\lform{\rho_m}{\nabla (\bL_{\varphi^v(s)^{-1}}^{-1}\rho_m) \cdot \Ad_{\varphi^v(t)^{-1}}w - \nabla (\bL_{\tilde \varphi^v(s)^{-1}}^{-1}\rho_m) \cdot \Ad_{\tilde \varphi^v(t)^{-1}}w} \\
\leq c(M) \|\rho_m\|_{1-r,2}^2 \|w\|_{r, \infty} \sup_{s\leq t}\|\varphi^v(s) - \varphi^{\tilde v}(s)\|_{r, \infty}
\end{multline*}•
so that (using $r\leq p-1$)
\[
\|\beta^v(t) - \beta^{\tilde v}(t)\|_{p-1, \infty, *} \leq c(M) \big(\|\rho_{\varphi,0}\|_{p-2, \infty,*} + t\|\rho_m\|_{1-r,2}^2\big) \sup_{s\leq t} \|\varphi^v(s)-\varphi^{\tilde v}(s)\|_{p-1, \infty}.
\]
Using the fact that 
\[
\sup_{s\leq t_0} \|\varphi^v(s)-\varphi^{\tilde v}(s)\|_{p-1, \infty} \leq c(M) \|v-\tilde v\|_{L^2([0,t_0], V)}
\]
we find that $v \mapsto \bK_V\beta^v$ is Lipschitz on the ball of radius $M$ in $L^2([0,t_0], V)$, with Lipschitz constant less than
$c(M) t_0\big(\|\rho_{\varphi,0}\|_{p-2, \infty,*} + t_0\|\rho_m\|_{1-r,2}^2\big)$.

Reducing the value of $t_0$ if needed, one can make this upper-bound less than 1 to ensure  that $v \mapsto \bK_V \beta^v$ has a unique fixed point in the ball of radius $M$ in $L^2([0,t_0], V)$. This shows that system \eqref{eq:pmp.2.red} has a unique solution (with the considered initial condition) over the interval $[0, t_0]$.\\

A valid choice for $t_0$ can therefore be made in terms of $M$, $\|\rho_{\varphi,0}\|_{p-2, \infty,*}$, and  $\|\rho_m\|_{1-r,2}$ uniquely; since $M$ can itself be chosen as a function of the last two norms, their values are sufficient to specify $t_0$. If we now define $T_0$ to be the largest time $T_0\leq 1$ such that a solution exists over all intervals $[0,t]\subset [0, T_0)$, we must have $T_0 = 1$ unless $\|\rho_{\varphi}(t)\|_{p-2, \infty,*}$ tends to $\infty$ when $t$ tends to $T_0$ (recall that $\rho_m$ is time-independent). Since $\rho_{\varphi}(t) = \bT_{\varphi(t)^{-1}}^* \bA_V v(t) = \bT_{\varphi(t)^{-1}}\beta^v(t)$, equation \eqref{eq:betav} shows that $\|\rho_{\varphi}(t)\|_{p-2, \infty,*}$ must remain bounded, showing that $T_0 = 1$ necessarily.

Since one can obviously replace the unit interval by any interval $[0,T]$, we have obtained the following result.

\begin{theorem}
\label{th:ivp}
Assume that $p\geq 1+d/2$ and $p\geq r+1$. Then system \eqref{eq:pmp.2} has a unique solution over any bounded interval as soon as $\rho_{\varphi,0}\in (\cB^{p-2})^*$ and $\rho_m\in H^{1-r}(\mR^d)$.
\end{theorem}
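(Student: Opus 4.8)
The strategy is the standard one for local-in-time well-posedness of an infinite-dimensional ODE: recast the system as a fixed-point equation for $v$, establish a self-mapping bound and a contraction bound on a short time interval, invoke the Banach fixed-point theorem for local existence and uniqueness, and then use a conservation law to continue the local solution to the whole interval. The pieces needed for the estimates are the continuity and differentiability properties of $\tilde\bT_\psi$, $\bL_\psi^{-1}$ and $\bR_\varphi$ collected in the preliminary subsection, together with the flow estimates there.

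First I would reduce the full system \eqref{eq:pmp.2} to the subsystem \eqref{eq:pmp.2.red}: since $\rho_m$ is time-independent and $m$ is recovered from $(\varphi,\zeta)$ by a quadrature, only $(\varphi,\rho_\varphi)$ — equivalently $v$ — needs to be solved for. Integrating the linear costate equation \eqref{eq:ode.rho} explicitly (the computation giving $\lform{\rho_\varphi(t)}{D\varphi(t)w}$ in closed form) collapses the system to the single consistency relation \eqref{eq:v.consist}, i.e. $v(t)=\bK_V\beta^v(t)$ with $\beta^v$ the right-hand-side functional there. A solution of \eqref{eq:pmp.2.red} is then exactly a fixed point of $\Phi:v\mapsto\bK_V\beta^v$ in $L^2([0,t_0],V)$.

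Next I would run the contraction argument on the closed ball of radius $M$ in $L^2([0,t_0],V)$. The self-mapping property follows from the bound \eqref{eq:betav}, which rests on the $\Ad$ estimate \eqref{eq:rho.bound} together with the mapping property $\bL_{\psi^{-1}}^{-1}:H^{1-r}(\mR^d)\to H^{r+1}(\mR^d)$ from the preliminaries; here the hypothesis $p\geq r+1$ is exactly what places $\beta^v(t)$ in $(\cB^{p-2})^*\subset V^*$. For the contraction I would combine the Lipschitz estimate \eqref{eq:ad.lip} for $\Ad_{\varphi^v(t)^{-1}}$ with the Lipschitz estimate \eqref{eq:ivp.3} for $\psi\mapsto\bL_{\psi^{-1}}^{-1}\rho_m$, and feed in the flow-stability bound $\sup_{s\le t_0}\|\varphi^v(s)-\varphi^{\tilde v}(s)\|_{p-1,\infty}\le c(M)\|v-\tilde v\|_{L^2([0,t_0],V)}$. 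Shrinking $t_0$ controls both the self-map radius and the Lipschitz constant, yielding a unique fixed point, hence a unique solution on $[0,t_0]$.

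The crucial point — and the step I expect to be the main obstacle — is to guarantee that the local existence time $t_0$ does not shrink to zero as one marches forward, so the local solution extends to the full interval. This is handled by the conservation of $\|v(t)\|_V^2+\sigma^{-2}\|\zeta(t)\|_H^2$ along solutions of \eqref{eq:pmp.2.red} (obtained by differentiating and using $\partial_t\varphi=v\circ\varphi$), which bounds $\|v\|_{L^2([0,t_0],V)}$ a priori by $\|\rho_{\varphi,0}\|_{V^*}^2+\sigma^2\|\rho_m\|_{H^*}^2$ for every $t_0\le1$; thus $M$, and therefore $t_0$, can be chosen in terms of $\|\rho_{\varphi,0}\|_{p-2,\infty,*}$ and $\|\rho_m\|_{1-r,2}$ alone, independently of the starting time. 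Finally I would set $T_0$ to be the maximal existence time and argue by continuation: a solution can fail to extend past $T_0<1$ only if $\|\rho_\varphi(t)\|_{p-2,\infty,*}\to\infty$, but $\rho_\varphi(t)=\bT_{\varphi(t)^{-1}}^*\bA_V v(t)$ together with the uniform bound \eqref{eq:betav} keeps this norm bounded, forcing $T_0=1$; rescaling to $[0,T]$ then gives the statement on any bounded interval. The delicate bookkeeping throughout is the tracking of Sobolev indices — ensuring each application of $\bL^{-1}$, $\nabla$ and $\Ad$ lands in the space the next estimate requires — which is precisely where the hypotheses $p\ge r+1$ and $r>d/2$ are consumed.
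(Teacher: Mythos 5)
Your proposal follows essentially the same route as the paper's proof: reduction of \eqref{eq:pmp.2} to the subsystem \eqref{eq:pmp.2.red} and then to the consistency equation \eqref{eq:v.consist}, a Banach fixed-point argument for $v\mapsto\bK_V\beta^v$ on the ball of radius $M$ in $L^2([0,t_0],V)$ using \eqref{eq:betav}, \eqref{eq:ad.lip}, \eqref{eq:ivp.3} and the flow-stability bound, an a priori bound on $M$ from the conservation of $\|v(t)\|_V^2+\sigma^{-2}\|\zeta(t)\|_H^2$, and finally the continuation argument showing $T_0=1$ via the boundedness of $\|\rho_\varphi(t)\|_{p-2,\infty,*}=\|\bT_{\varphi(t)^{-1}}^*\bA_V v(t)\|_{p-2,\infty,*}$. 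The argument is correct and matches the paper's proof step for step.
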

Note that, with metamorphosis, the boundary condition requires that $\lform{\rho_{\varphi,0}}{w} = \lform{\rho_m}{\nabla q_0\cdot w}$. Assuming that $q_0 \in H^1(\mR^d)$ (which is restrictive only for $r=0$), we see that $\rho_m\in H^{1-r}(\mR^d)$ implies that $\rho_{\varphi,0} \in (\cB^{r-1})^* \subset (\cB^{p-2})^*$ since $p\geq r+1$, so that the regularity condition for $\rho_{\varphi,0}$ is automatically satisfied.\\

\noindent {\textbf{Remark}} In the previous result, we ``lose'' two derivatives in the initial condition for $\rho_\varphi$ and one in $\rho_m$. This can be improved under more restrictive assumptions on the spaces $V$ and $H$.
\begin{itemize}
\item Assume that the norm on $H$ is specified by a differential operator. We have seen that $\psi\mapsto \lform{\rho_m}{\bL^{-1}_{\psi^{-1}}\rho_m}$ was a smooth function of $\psi\in \Diff^p$ as soon as $\rho_m\in H$, with 
\[
\lform{\rho_m}{\nabla(\bL^{-1}_{\psi^{-1}}\rho_m)\cdot w} = (1/2)\partial_\psi \lform{\rho_m}{\bL^{-1}_{\psi^{-1}}\rho_m}. w.
\]
Using this property, one can carry on the estimates on the second term in $\beta^v$ using only the assumption $\rho_m\in H^{-r}$, and therefore extend the conclusion of the theorem to this case.
\item If one makes the same hypothesis   for $V$, namely that $V\sim H^p(\mR^d, \mR^d)$ with $p\geq 1+d/2$ and $p\geq r$ (note that this assumption only implies that $V$ is embedded in $\cB^1$), the associated group $\Diff_V$  is then included in the Hilbert manifold $\mathcal D^p$ of diffeomorphisms $\psi$ such that $\psi - \id$ and $\psi^{-1}-\id$ both belong to $H^{p}(\mR^d, \mR^d)$, on which the right invariant metric is a strong Riemannian metric (i.e., the Riemannian topology coincides with the one induced by $H^p(\mR^d, \mR^d)$). This is a consequence of Lemma 1 and of results on the stability of Sobolev spaces by products which implies that all terms $(\partial_{\gamma_1} \psi_{j_1})^{\ell_1}\cdots (\partial_{\gamma_k} \psi_{j_k})^{\ell_k}\partial_\beta z$ are square integrable as soon as $|\beta| + \sum_{q=1}^k \ell_q(|\gamma_q|-1)\leq p$ \cite{palais1968foundations}. (It has actually recently been showed that $\mathcal D^{p}$ coincides with $\Diff_{V}$; see \cite{bruveris2014completeness}.)  The right-invariant metric
\[
\|(\xi, z)\|_{(\psi,q)}^2 = \|\xi\circ \psi^{-1}\|_V^2 + \sigma^{-1} \|z\circ \psi^{-1}\|_{r,2}^2
\]  
on the  product space $\Diff_V\times H^r(\mR^d)$ is then also a strong metric as soon as $r\leq p$, and since \eqref{eq:pmp.2} is the geodesic equation on this manifold,  its solutions are uniquely defined over arbitrary time intervals without loss of derivatives (see \cite{lang1962introduction,abraham1963lectures,ebin1970groups,ebin1970manifold,misiolek2010fredholm,inci2013regularity}, and the references therein, for more details).
\end{itemize}

\section{Discussion}
\label{sec:conclusion}

In this paper we developed new numerical tools, combined with an extension of known theoretical results, on image metamorphosis. We proposed, in particular, a particle-based optimization method for their estimation, based on the determination of initial conditions of the geodesic equation performed via a shooting method. The resulting algorithm allows for a numerically-stable sparse representation of the target image in a template-centered coordinate system, which was  hard to achieve using previous methods. This improvement was made possible by the introduction of a Sobolev norm in image space, allowing for particle solutions that were not available when using an $L^2$ norm.

One of the limitations of the discretization scheme discussed in section \ref{sec:singular} is its asymmetry, since the evolving image is represented using a moving grid, $x$, which is specified at time $t=0$, in the template coordinate frame (the continuous problem itself is symmetric, so that the asymmetry disappears in the discretization limit). Our scheme  can, however, be modified to incorporate more symmetry by introducing a second set of particles, this time defined in the target coordinate frame.   More precisely, one can add to \eqref{eq:meta.cost.3} another set of constraints, associated to a new grid $y$ and image value $n$ (in addition to $x$ and $m$) in the form $\dot y_k = v(t, y_k)$, $\dot n_k = \zeta(t, y_k)$, $n_k(0) = q^{(0)}(y_k(0))$, $n_k(1) = q^{(1)}(y_k^{(1)})$. The optimality equations are similar to those derived in \eqref{eq:disc.syst} (the states are simply extended from $x$ to $(x,y)$ and from $m$ to $(m,n)$, with extended control variables $z$ and $\alpha$). The shooting algorithm must then be parametrized by the initial controls, as described in this paper, but also by the initial position of the $y$ variables, with a new objective function  
\[
E = \sum_{k=1}^N (m_k(1) - q^{(1)}(x_k(1)))^2 + \sum_{k=1}^N (n_k(1) - q^{(1)}(y_k^{(1)}))^2 + \sum_{k=1}^N |y_k(1) - y_k^{(1)}|^2.
\]
This symmetrized discretization scheme can be addressed along the same lines as the one studies in the present paper.

\section{Appendix}
\subsection{Forward and Adjoint Systems}
We here provide more details on the implementation of the adjoint method described in Section \ref{sec:solution}. We assume, in the following, that $K_V$ is a scalar multiple of the identity matrix, and 
taking variations of \eqref{eq:disc.syst} in the discrete variables $x_k, z_k, m_k$ yields a forward system of equations 
that evolves these variations (note, to keep the equations below compact, we do not write the explicit evaluation of $K_{V}$ and $K_H$ at $x_k, x_\ell$):
\begin{align*}
  \partial_t (\delta x_k) = & \frac{1}{\sigma^2} \sum_{\ell=1}^{N} \left( (\nabla_1 K_{V})\cdot \delta x_k z_\ell(t)  
  + ( \nabla_2 K_{V} )\cdot \delta x_\ell z_\ell(t)  + K_{V}\delta z_\ell(t) \right), \\
  \partial_t (\delta z_k) = & - \frac{1}{\sigma^2} \sum_{\ell=1}^{N} \big( z_\ell(t)\cdot z_k(t) (D^2_{11} K_{V})^T \delta x_k  +  
  z_\ell(t)\cdot z_k(t) (D^2_{12} K_{V})^T \delta x_\ell   \\ 
  \nonumber & \qquad\qquad  + \nabla_1 K_{V} z_k(t)\cdot \delta z_\ell(t) + \nabla_1 K_{V} z_\ell(t)\cdot \delta z_k(t) \big) \\
  \nonumber
  &- \sum_{\ell=1}^{N} \left( \alpha_k \alpha_\ell (D^2_{11} K_H)^T \delta x_k - 
  \alpha_k \alpha_\ell (D^2_{12} K_H)^T \delta x_\ell - \nabla_1 K_H \alpha_k \delta \alpha_\ell - \nabla_1 K_H \alpha_\ell \delta \alpha_k \right), \\
  \partial_t (\delta m_k)  = & \sum_{\ell=1}^N \left( \alpha_\ell (\nabla_1 K_H)\cdot \delta x_k + \alpha_\ell (\nabla_2 K_H)\cdot \delta x_\ell 
  +  K_H \delta \alpha_\ell \right), 
\end{align*}
where $\delta x_k(t)$ denotes a variation in the value of the position of the node $x_k$ at time $t$ (and analogously for $\delta z_k$, $\delta m_k$), and
$K_V, K_H$ are treated as functions on $\mathbb{R}^d \times \mathbb{R}^d$, and so the subscripts for the gradient and Jacobian denote differentiation with respect to 
the first and second variables $x_k, x_\ell \in \mathbb{R}^d$. 

Let $\xi_x, \xi_z, \xi_m,  \eta_\alpha$ denote dual forms to the variations $\delta x, \delta z, \delta m$, and $\eta_{\alpha}$ the associated variation in $\alpha$, as introduced in \eqref{eq:adj.syst}, which expands as (again without writing the evaluation of the kernel terms, and combining 
the summations for compactness of notation):
\begin{align*}
\left( \partial_t \xi_x \right)_k & = \sum_{\ell=1}^N \Big\{ - \frac{1}{\sigma^2} \big( \nabla_1 K_{V} z_\ell(t)\cdot \xi_{x,k}(t) + 
\nabla_1 K_{V} z_k(t)\cdot \xi_{x,\ell}(t) + z_\ell(t)\cdot z_k(t) D^2_{11} K_{V} \xi_{z,k}(t)  \\
\nonumber &  \qquad\qquad + ~ z_k(t)\cdot z_\ell(t) D^2_{21} K_{V} \xi_{z,\ell}(t) \big)   + \alpha_k\alpha_\ell D^2_{11} K_H \xi_{z,k}(t) + \alpha_\ell \alpha_k D^2_{21} K_H \xi_{z,\ell}(t)   \\
\nonumber & \qquad\qquad - ~  \alpha_\ell \nabla_1 K_H \xi_{m,k}(t) - \alpha_k \nabla_1 K_H \xi_{m,\ell}(t) 
 \Big\}, \\
 \left( \partial_t  \xi_z \right)_k  & = \frac{1}{\sigma^2}  \sum_{\ell=1}^N \Big\{ - K_{V} \xi_{x,\ell}(t) + z_\ell(t) (\nabla_1 K_{V})\cdot \xi_{z,k}(t) +
 z_\ell(t) (\nabla_2 K_{V})\cdot \xi_{z,\ell}(t)  \Big\} \\
 \left( \partial_t \eta_\alpha \right)_k & =   \sum_{\ell=1}^N \Big\{  \alpha_\ell (\nabla_1 K_H)\cdot \xi_{z,k}(t) + \alpha_\ell (\nabla_2 K_H)\cdot \xi_{z,\ell}(t) - K_H \xi_{m,\ell}(t) \Big\} .
\label{eqn.adjoint_system}
\end{align*}
Note that since no other variables depend on $m_k$ in the forward system, the dual variable $\xi_m$ is constant in time, and so we do not display its evolution in the list above.
\bibliographystyle{plain}

\end{document}